\numberwithin{equation}{section}
\renewcommand{\AA}{\mathbb A}
\newcommand{\CC}{\mathbb C}
\newcommand{\FF}{\mathbb F}
\newcommand{\GG}{\mathbb G}
\newcommand{\PP}{\mathbb P}
\newcommand{\QQ}{\mathbb Q}
\newcommand{\RR}{\mathbb R}
\newcommand{\ZZ}{\mathbb Z}
\newcommand{\OO}{\mathcal O}
\newcommand{\aA}{\mathfrak a} 
\newcommand{\bB}{\mathfrak b} 
\newcommand{\p}{\mathfrak p}
\newcommand{\Pp}{\mathfrak P}
\newcommand{\Pic}{\operatorname{Pic}}
\newcommand{\ang}[1]{ \langle #1 \rangle  }
\def\ab{{\operatorname{ab}}}
\def\sep{{\operatorname{sep}}}
\def\perf{{\operatorname{perf}}}
\def\Gal{\operatorname{Gal}}
\def\ord{\operatorname{ord}}
\def\Aut{\operatorname{Aut}} 
\def\End{\operatorname{End}}
\def\Hom{\operatorname{Hom}}
\def\Frob{\operatorname{Frob}}
\newcommand{\defi}[1]{\textsf{#1}} 
\def\bbar#1{\setbox0=\hbox{$#1$}\dimen0=.2\ht0 \kern\dimen0 
\overline{\kern-\dimen0 #1}}
\newcommand{\Qbar}{{\overline{\mathbb Q}}} 
\newcommand{\Lbar}{\bbar{L}} 
\newcommand{\kbar}{\bbar{k}} 
\newcommand{\Fbar}{\bbar{F}} 
\newcommand{\FFbar}{\overline{\FF}} 
\def\twistedLaurent{ (\!(\tau^{-1})\!) }
\def\twistedseries{ [\![\tau^{-1}]\!] }
\newtheorem{thm}{Theorem}[section]
\newtheorem{lemma}[thm]{Lemma}
\newtheorem{cor}[thm]{Corollary}
\newtheorem{prop}[thm]{Proposition}
\theoremstyle{definition}
\newtheorem{definition}[thm]{Definition}
\theoremstyle{remark}
\newtheorem{remark}[thm]{Remark}
\newenvironment{romanenum}{\hfill \begin{enumerate} }{\end{enumerate}}
\definecolor{webcolor}{rgb}{0.8,0,0.2}
\definecolor{webbrown}{rgb}{.6,0,0}
\begin{document}
\title[]{Explicit Class Field Theory for global function fields}

\author{David Zywina}
\address{Department of Mathematics and Statistics, Queen's University, Kingston, ON  K7L~3N6, Canada}
\email{zywina@mast.queensu.ca}
\urladdr{http://www.mast.queensu.ca/\~{}zywina}

\begin{abstract}
Let $F$ be a global function field and let $F^\ab$ be its maximal abelian extension.  Following an approach of D.~Hayes, we shall construct a continuous homomorphism $\rho\colon \Gal(F^\ab/F)\to C_F$, where $C_F$ is the idele class group of $F$.   Using class field theory, we shall show that our $\rho$ is an isomorphism of topological groups whose inverse is the Artin map of $F$.   As a consequence of the construction of $\rho$, we obtain an explicit description of $F^\ab$.      

Fix a place $\infty$ of $F$, and let $A$ be the subring of $F$ consisting of those elements which are regular away from $\infty$.  We construct $\rho$ by combining the Galois action on the torsion points of a suitable Drinfeld $A$-module with an associated $\infty$-adic representation studied by J.-K.~Yu.
\end{abstract}

\subjclass[2000]{Primary 11R37; Secondary 11G09}

\maketitle

\section{Introduction}

Let $F$ be a global field, that is, a finite field extension of either $\QQ$ or a function field $\FF_p(t)$.    Fix an algebraic closure $\Fbar$ of $F$, and let $F^\sep$ be the separable closure of $F$ in $\Fbar$.  \emph{Class field theory} gives a description of the maximal abelian extension $F^\ab$ of $F$ in $F^\sep$.   Let $\theta_F \colon  C_F \to \Gal(F^\ab/F)$ be the \defi{Artin map} where $C_F$ is the \defi{idele class group} of $F$; see \S\ref{SS:notation} for notation and \cite{tate-gcft} for background on class field theory.  The map $\theta_F$ is a continuous homomorphism.  By taking profinite completions, we obtain an isomorphism of topological groups:
\begin{equation*} \label{E:thetaF completion}
\widehat{\theta}_F \colon  \widehat C_F \xrightarrow{\sim} \Gal(F^\ab/F).
\end{equation*}
So $\theta_F$ gives a one-to-one correspondence between the finite abelian extensions $L$ of $F$ in $F^\sep$ and the finite index open subgroups of $C_F$.  For a finite abelian extension $L/F$, the corresponding open subgroup of $C_F$ is the kernel $U$ of the homomorphism 
\[
C_F\xrightarrow{\theta_F} \Gal(F^\ab/F)\twoheadrightarrow \Gal(L/F)
\]
where the second homomorphism is the restriction map $\sigma\mapsto \sigma|_L$ (the group $U$ is computable; it equals $N_{L/F}(C_L)$ where $N_{L/F}\colon C_L\to C_F$ is the norm map).   However, given an open subgroup $U$ with finite index in $C_F$, the Artin map ${\theta}_F$ does \emph{not} explicitly produce the corresponding extension field $L/F$ (in the sense, that it does not give a concrete set of generators for $L$ over $F$; though it does give enough information to uniquely characterize it).  \emph{Explicit class field theory} for $F$ entails giving constructions for all the abelian extensions of $F$.  We shall give a construction of $F^\ab$, for a global function field $F$, that at the same time produces the inverse of $\widehat\theta_F$ (without referring to $\widehat\theta_F$ or class field theory itself).\\

\subsection{Context}
D.~Hayes \cite{MR0330106} provided an explicit class field theory for rational function fields $F=k(t)$.   He built on and popularized the work of Carlitz from the 1930s \cite{MR1501937}, who had constructed a large class of abelian extensions using what is now called the \emph{Carlitz module} (this is the prototypical Drinfeld module; we will give the basic definitions concerning Drinfeld modules in \S\ref{S:background}).   Drinfeld and Hayes have both described explicit class field theory for an arbitrary global function field $F$, see \cite{MR0384707} and \cite{MR535766}.   Both proceed by first choosing a distinguished place $\infty$ of $F$, and their constructions give the maximal abelian extension $K_\infty$ of $F$ that splits completely at $\infty$.   Drinfeld defines a moduli space of rank 1 Drinfeld (elliptic) modules with level structure arising from the ``finite'' places of $F$ whose spectrum turns out to be $K_\infty$.  Hayes fixes a normalized Drinfeld module $\phi$ of rank 1 whose field of definition along with its torsion points can be used to construct $K_\infty$ (this approach is more favourable for explicit computations).

  One approach to computing the full field $F^\ab$ is to chose a second place $\infty'$ of $F$, since $F^\ab$ will then be the compositum of $K_\infty$ and $K_{\infty'}$.   We wish to give a version of explicit class field theory that does not require this second unnatural choice.   In Drinfeld's second paper on Drinfeld modules \cite{MR0439758}, he achieves exactly this my considering another moduli space of rank 1 Drinfeld modules but now with additional $\infty$-adic structure.    As remarked by Goss in \cite{MR1423131}*{\S7.5}, it would be very useful to have a modification of Drinfeld's construction that can be applied directly to $\phi$ to give the full abelian closure $F^\ab$.  We shall do exactly this!   J.-K.~Yu \cite{MR2018826} has studied the additional $\infty$-adic structure introduced by Drinfeld and has teased out the implicit Galois representation occurring there.  Yu's representation, which may also be defined for higher rank Drinfeld modules, can be viewed as an analogue of the Sato-Tate law, cf.~\cite{MR2018826} and \cite{Zywina-SatoTate}.

\subsection{Overview}

The goal of this paper is to given an explicit construction of the inverse $\widehat{\theta}_F$.     Moreover, we will construct an isomorphism of topological groups
\[
\rho\colon W_F^\ab \to C_F,
\]
where $W_F^\ab$ is the subgroup of $\Gal(F^\ab/F)$ that acts on the algebraic closure $\kbar$ of $k$ in $F^\sep$ as an integral power of Frobenius map $x\mapsto x^q$ (we endow $W_F^\ab$ with the weakest topology for which $\Gal(F^\ab/\kbar)$ is an open subgroup and has its usual profinite topology).   The inverse of our $\rho$ will be the homomorphism $\alpha\mapsto \theta_F(\alpha)^{-1}$ (Theorem~\ref{T:main}).     For an open subgroup $U$ of $C_F$ with finite index, define the homomorphism
\[
\rho_U\colon W_F^\ab \to C_F\twoheadrightarrow C_F/U;
\]
it factors through $\Gal(L_U/F)$ where $L_U$ is the field corresponding to $U$ via class field theory.  Everything about $\rho$ is computable, and in particular one can find generators for $L_U$.

In \S\ref{S:background}, we shall give the required background on Drinfeld modules; in particular, we focus on normalized Drinfeld modules of rank 1.  The representation $\rho$ will then be defined in \S\ref{SS:inverse}.     The rest of the introduction serves as further motivation and will not be needed later.  After a quick recap of explicit class field for $\QQ$, we shall describe the abelian extensions of $F=k(t)$ constructed by Carlitz which will lead to a characterization of $\rho_\infty$.  We will treat the  case $F=k(t)$ in more depth in \S\ref{S:k(t)}; in particular, we will recover Hayes' description of $k(t)^\ab$ as the compositum of three linearly disjoint fields.

\subsection{The rational numbers}

We briefly recall explicit class field theory for $\QQ$.  For each positive integer $n$, let $\mu_n$ be the subgroup of $n$-torsion points in $\Qbar^\times$; it is a free $\ZZ/n\ZZ$-module of rank $1$.  Let
\[
\chi_n \colon \Gal(\QQ^\ab/\QQ) \twoheadrightarrow (\ZZ/n\ZZ)^\times
\]
be the representation for which $\sigma(\zeta) = \zeta^{\chi_n(\sigma)}$ for all $\sigma \in \Gal(\QQ^\ab/\QQ)$ and $\zeta\in \mu_n$.  By taking the inverse image over all $n$, ordered by divisibility, we obtain a continuous and surjective representation 
\[
\chi \colon \Gal(\QQ^\ab/\QQ) \to \widehat{\ZZ}^\times.
\]
The \emph{Kronecker-Weber theorem} says that $\QQ^\ab$ is the cyclotomic extension of $\QQ$, and hence $\chi$ is an isomorphism of topological groups.  

The group $\widehat{\ZZ}^\times \times \RR^+$ can be viewed as a subgroup of $\AA_\QQ^\times$, where $\RR^+$ is the group of positive elements of $\RR^\times$.   The quotient map $\widehat{\ZZ}^\times \times \RR^+ \to C_\QQ$ is an isomorphism of topological groups, and by taking profinite completions we obtain an isomorphism $\widehat{\ZZ}^\times=\widehat{\ZZ}^\times\times \widehat{\RR^+} \xrightarrow{\sim} \widehat{C}_\QQ$.  Composing $\chi$ with this map, we have a isomorphism
\[
\rho\colon \Gal(\QQ^\ab/\QQ) \xrightarrow{\sim} \widehat{C}_\QQ.
\]
One can show that the inverse of the Artin map $\widehat{\theta}_\QQ\colon \widehat{C}_\QQ  \xrightarrow{\sim} \Gal(\QQ^\ab/\QQ)$ is simply the map $\sigma \mapsto \rho(\sigma)^{-1}$.

\subsection{The rational function field}
\label{SS:rational field}
Let us briefly consider the rational function field $F=k(t)$ where $k$ is a finite field with $q$ elements; it is the function field of the projective line $\PP^1_k$.  Let $\infty$ denote the place of $F$ for which $A=k[t]$ is the ring of regular function of $\PP^1_k$ that are regular away from $\infty$.

Let $\End_{k}(\GG_{a,F})$ be the ring of $k$-linear endomorphisms of the additive group scheme $\GG_{a}$ over $F$.   More concretely, $\End_k(\GG_{a,F})$ is the ring of polynomials $\sum_i c_i X^{q^i} \in F[X]$ with the usual addition and the multiplication operation being composition.  The \defi{Carlitz module} is the homomorphism
\[
\phi\colon A \to \End_k(\GG_{a,F}),\, a\mapsto \phi_a
\]
of $k$-algebras for which $\phi_t = t + X^q$.   Using the Carlitz module, we can give $F^\sep$ an interesting new $A$-module structure; i.e., for $a\in A$ and $\xi \in F^\sep$, we define $a\cdot \xi := \phi_a(\xi)$. 

For a monic polynomial $m\in A$, let $\phi[m]$ be the $m$-torsion subgroup of $F^\sep$, i.e., the set of $\xi \in F^\sep$ for which $m\cdot \xi=0$  (equivalently, the roots of the separable polynomial $\phi_m \in F[X]$).   The group $\phi[m]$ is free of rank $1$ as an $A/(m)$-module, and we have a continuous surjective homomorphism
\[
\chi_m \colon \Gal(F^\ab/F) \to (A/(m))^\times
\]
such that $\sigma(\xi) = \chi_m(\sigma)\cdot \xi$ for all $\sigma \in \Gal(F^\ab/F)$ and $\xi\in \phi[m]$.  By taking the inverse image over all monic $m\in A$, ordered by divisibility, we obtain a surjective continuous representation 
\[
\chi \colon \Gal(F^\ab/F) \to \widehat{A}^\times.
\]
However, unlike the cyclotomic case, the map $\chi$ is not an isomorphism.   The field $\bigcup_m F(\phi[m])$ is a geometric extension of $F$ that is tamely ramified at $\infty$; it does not contain extensions of $F$ that ramify at $\infty$ or the constant extensions.   

Following J.K.~Yu, and Drinfeld, we shall define a continuous homomorphism
\[
\rho_\infty \colon W_F^\ab \to F_\infty^\times
\]
where $F_\infty=k(\!(t^{-1})\!)$ is the completion of $F$ at the place $\infty$.   We will put off its definition (see \S\ref{S:k(t)} for further details in the $k(t)$ case), and simply note that $\rho_\infty$ can be characterized by the fact that it satisfies $\rho_\infty(\Frob_\p)=\p$ for each monic irreducible polynomial $\p$ of $A$.   The image of $\rho_\infty$ is contained in the open subgroup $F_\infty^+:= \ang{t}\cdot (1+t^{-1}k[\![t^{-1}]\!])$ of $F_\infty^\times$.  We can view $\widehat{A}^\times \times F_\infty^+$ as an open subgroup of the ideles $\AA_F^\times$.   We define the continuous homomorphism
\begin{equation*}
\rho\colon W_F^\ab \xrightarrow{\chi \times \rho_\infty } \widehat{A}^\times \times F_\infty^+ \to C_F
\end{equation*}
where the first map takes $\sigma\in W_F^\ab$ to $(\chi(\sigma),\rho_\infty(\sigma))$ and the second map is the compositum of the inclusion map to $\AA_F^\times$ with the quotient map $\AA_F^\times \to C_F$.

The main result of this paper, for $F=k(t)$, says that the above homomorphism $\rho \colon W_F^\ab \to C_F$ is an isomorphism of topological groups.  Moreover, the inverse of the Artin map $\theta_F\colon C_F\to W^\ab_F$ is the homomorphism $\sigma \mapsto \rho(\sigma)^{-1}$.   In particular, observe that $\rho$ does not depend on our initial choice of place $\infty$!  Taking profinite completions, we obtain an isomorphism $\widehat{\rho}\colon \Gal(F^\ab/F)\xrightarrow{\sim} \widehat{C}_F$.   \\

The constructions for a general global function field $F$ is more involved; it more closely resembles the theory of complex multiplication for elliptic curves than the cyclotomic theory.   We first choose a place $\infty$ of $F$.   In place of the Carlitz module, we will consider a suitable rank 1 Drinfeld module $\phi$.   We cannot always take $\phi$ to be defined over $F$, but we can choose a model defined over the maximal abelian extension $H_A$ of $F$ that is unramified at all places and splits completely at $\infty$ (we will actually work with a slightly larger field $H_A^+$).

\subsection{Notation}  \label{SS:notation}
 Throughout this paper, we consider a global function field $F$ with a fixed place $\infty$.   Let $A$ be the subring consisting of those functions that are regular away from $\infty$.   Denote by $k$ the field of constants of $F$ and let $q$ be the cardinality of $k$.  

For each place $\lambda$ of $F$, let $F_\lambda$ be the completion of $F$ at $\lambda$.   Let $\ord_\lambda \colon F_\lambda^\times \twoheadrightarrow \ZZ$ be the discrete valuation corresponding to $\lambda$ and let $\OO_\lambda\subseteq F_\lambda$ be its valuation ring.    The \defi{idele group} $\AA_F^\times$ of $F$ is the subgroup of $(\alpha_\lambda)\in \prod_\lambda F_\lambda^\times$, where the product is over all places of $F$, such that $\alpha_\lambda$ belongs to $\OO_\lambda^\times$ for all but finitely many places $\lambda$.   The group $\AA_F^\times$ is locally compact when endowed with the weakest topology for which $\prod_\lambda \OO_\lambda^\times$, with the profinite topology, is an open subgroup.  We embed $F^\times$ diagonally into $\AA_F^\times$; it is a discrete subgroup.   The \defi{idele class group} of $F$ is $C_F:=\AA_F^\times/F^\times$ which we endow with the quotient topology.

Let $\mathfrak{m}_\lambda$ be the maximal ideal of $\OO_\lambda$.   Define the finite field $\FF_\lambda=\OO_\lambda/\mathfrak{m}_\lambda$ whose cardinality we will denote by $N(\lambda)$.    The degree of the place $\infty$ is $d_\infty:= [\FF_\infty : k]$.

For a place $\p$ of $F$, we will denote by $\Frob_\p$ an arbitrary representative of the (arithmetic) Frobenius conjugacy class of $\p$ in $\Gal(F^\sep/F)$.\\

Let $L$ be an extension field of $k$.   We fix an algebraic closure $\Lbar$ of $L$ and let $L^\sep$ be the separable closure of $L$ in $\Lbar$.  We shall take $\kbar$ to be the the algebraic closure of $k$ in $L^\sep$.  Let $L^\perf$ be the perfect closure of $L$ in $\Lbar$.   We shall denote the absolute Galois group of $L$ by $\Gal_L:=\Gal(L^\sep/L)$.   The \defi{Weil group} $W_L$ of $L$ is the subgroup of $\Gal_L$ consisting of those automorphisms $\sigma$ for which there exists an integer $\deg(\sigma)$ that satisfy $\sigma(x)=x^{q^{\deg(\sigma)}}$ for all $x\in \kbar$.  The map $\deg \colon W_L \to \ZZ$ is a group homomorphism with kernel $\Gal(L^\sep/L\kbar)$.     We endow $W_L$ with the weakest topology for which $\Gal(L^\sep/L\kbar)$ is an open subgroup with its usual profinite topology.   Let $W_L^\ab$ be the image of $W_L$ under the restriction map $\Gal_L\to \Gal(L^\ab/L)$ where $L^\ab$ is the maximal abelian extension of $L$ in $L^\sep$.\\

Let $L$ be an extension field of $k$.  We define $L[\tau]$ be the ring of polynomials in $\tau$ with coefficients in $L$ that obey the commutation rule $\tau \cdot a = a^q \tau$ for $a\in L$.  In particular, note that $L[\tau]$ will be non-commutative if $L\neq k$.  We can identify $L[\tau]$ with the $k$-algebra $\End_{k}(\GG_{a,L})$ consisting of the $k$-linear endomorphism of the additive group scheme $\GG_{a,L}$; identify $\tau$ with the endomorphism $X\mapsto X^q$.

Suppose that $L$ is perfect.   Let $L\twistedLaurent$ be the skew-field consisting of Laurent series in $\tau^{-1}$; it contains $L[\tau]$ as a subring  (we need $L$ to be perfect so that $\tau^{-1} \cdot a = a^{1/q} \tau$ is always valid).    Define the valuation $\ord_{\tau^{-1}}\colon L\twistedLaurent\to \ZZ\cup\{+\infty\}$ by $\ord_{\tau^{-1}}(\sum_i a_i \tau^{-i}) = \inf\{ i : a_i \neq 0 \}$ and $\ord_{\tau^{-1}}(0)=+\infty$.   The valuation ring of $\ord_{\tau^{-1}}$ is $L\twistedseries$, i.e., the ring of formal power series in $\tau^{-1}$.  Again note that $L\twistedLaurent$ and $L\twistedseries$ are non-commutative if $L\neq k$.

For a topological group $G$, we will denote by $\widehat{G}$ the profinite completion of $G$.   We will always consider profinite groups, for example Galois groups, with their profinite topology.    

\subsection*{Acknowledgements} Thanks to David Goss and Bjorn Poonen.

\section{Background} \label{S:background}
For an in-depth introduction to Drinfeld modules, see \cite{MR0384707,MR902591,MR1423131}.  The introduction \cite{MR1196509} of Hayes and Chapter VII of \cite{MR1423131} are particularly relevant to the material of this section. 

\subsection{Drinfeld modules}
Let $L$ be a field extension of $k$.  A \defi{Drinfeld module} over $L$ is a homomorphism $\phi\colon A \to L[\tau],\, x\mapsto \phi_x$ of $k$-algebras whose image contains a non-constant polynomial.  Let $\partial\colon L[\tau] \to L$ be the ring homomorphism that takes a twisted polynomial to its constant term.   We say that $\phi$ has \defi{generic characteristic} if the homomorphism $\partial \circ \phi \colon A \to L$ is injective; using this map, we then obtain an embedding $F\hookrightarrow L$ that we will view as an inclusion. 

The ring $L[\tau]$ is contained in the skew field $L^\perf\twistedLaurent$.  The map $\phi$ is injective, so it extends uniquely to a homomorphism $\phi\colon F\hookrightarrow L^\perf\twistedLaurent$.   The function $v\colon F\to \ZZ\cup\{+\infty\}$ defined by $v(x)=\ord_{\tau^{-1}}(\phi_x)$ is a discrete valuation that satisfies $v(x)\leq 0$ for all non-zero $x\in A$; the valuation $v$ is non-trivial since the image of $\phi$ contain a non-constant element of $L[\tau]$.  Therefore, $v$ is equivalent to $\ord_\infty$ and hence there exists a positive $r\in\QQ$ that satisfies
\begin{equation} \label{E:rank defn}
\ord_{\tau^{-1}}(\phi_x) = r d_\infty \ord_\infty(x) 
\end{equation}
for all $x\in F^\times$.   The number $r$ is called the \defi{rank} of $\phi$ and it is always an integer.  

Since $L^\perf\twistedLaurent$ is complete with respect to $\ord_{\tau^{-1}}$, the map $\phi$ extends uniquely to a homomorphism
\[
\phi\colon F_\infty\hookrightarrow L^\perf\twistedLaurent
\]
that satisfies (\ref{E:rank defn}) for all $x\in F_\infty^\times$.  This extension of $\phi$ was first constructed in \cite{MR0439758} and will be the key to the $\infty$-adic part of the construction of the inverse of the Artin map in \S\ref{SS:infty-rep}.   It will also lead to a more straightforward definition of the ``leading coefficient'' map $\mu_\phi$ of \cite{MR1196509}*{\S6}, see \S\ref{SS:normalization}.

Restricting our extended map $\phi$ to $\FF_\infty$ gives a homomorphism $\FF_\infty \to L^\perf \twistedseries$.   After composing $\phi|_{\FF_\infty}$ with the homomorphism $L^\perf\twistedseries\to L^\perf$ which takes a power series in $\tau^{-1}$ to its constant term, we obtain a homomorphism $\FF_\infty \hookrightarrow L^\perf$ of $k$-algebras whose image must lie in $L$.  In particular, the Drinfeld module $\phi$ gives $L$ the structure of an $\FF_\infty$-algebra.

Given two Drinfeld modules $\phi,\phi'\colon A \to L[\tau]$, an \defi{isogeny from $\phi$ to $\phi'$ over $L$} is a non-zero $f\in L[\tau]$ for which $f \phi_a = \phi'_a f$ for all $a\in A$.  

\subsection{Normalized Drinfeld modules} \label{SS:normalization}

Fix a Drinfeld module $\phi\colon A \to L[\tau]$ of rank $r$ and also denote by $\phi$ its extension $F_\infty \to L^\perf\twistedLaurent$.   For each $x\in F_\infty^\times$, we define $\mu_\phi(x)\in (L^\perf)^\times$ to be the first non-zero coefficient of the Laurent series $\phi_x \in L^\perf\twistedLaurent$.  By (\ref{E:rank defn}), the first term of $\phi_x$ is $\mu_\phi(x)\tau^{-rd_\infty\ord_\infty(x)}$.  For a non-zero $x\in A$, one can also define $\mu_\phi(x)$ as the leading coefficient of $\phi_x$ as a polynomial in $\tau$.    

For $x,y\in F_\infty^\times$, the value $\mu_\phi(xy)$ is equal to the coefficient of $\mu_ \phi(x) \tau^{-rd_\infty\ord_{\infty}(x)} \cdot  \mu_ \phi(y) \tau^{-rd_\infty\ord_{\infty}(y)}$, and hence  
\begin{equation} \label{E:sign relation}
\mu_ \phi(xy)=\mu_ \phi(x) \mu_ \phi(y)^{1/q^{r d_\infty\ord_\infty(x)}}.
\end{equation}
With respect to our embedding $\FF_\infty\hookrightarrow L$ arising from $\phi$, we have $\mu_\phi(x)=x$ for all $x\in \FF_\infty^\times$. 

We say that $\phi$ is \defi{normalized} if $\mu_\phi(x)$ belongs to $\FF_\infty^\times$ for all $x\in F_\infty^\times$ (equivalently, for all non-zero $x\in A$).  If $\phi$ is normalized, then by (\ref{E:sign relation}) the map $\mu_\phi\colon F_\infty^\times \to \FF_\infty^\times$ is a group homomorphism that equals the identity map when restricted to $\FF_\infty^\times$; this is an example of a sign function.  

\begin{definition}
A \defi{sign function} for $F_\infty$ is a group homomorphism $\varepsilon\colon F_\infty^\times \to \FF_\infty^\times$ that is the identity map when restricted to $\FF_\infty^\times$.  We say that $\phi$ is \defi{$\varepsilon$-normalized} if it is normalized and $\mu_\phi\colon F_\infty^\times \to \FF_\infty^\times$ is equal to $\varepsilon$ composed with some $k$-automorphism of $\FF_\infty$.
\end{definition}

A sign function $\varepsilon$ is trivial on $1+\mathfrak{m}_\infty$, so it determined by the value $\varepsilon(\pi)$ for a fixed uniformizer $\pi$ of $F_\infty$.

\begin{lemma} \label{L:normalization works} \cite{MR1196509}*{\S12}
Let $\varepsilon$ be a sign function for $F_\infty$ and let $\phi'\colon A\to L[\tau]$ be a Drinfeld module.   Then $\phi'$ is isomorphic over $\Lbar$ to an $\varepsilon$-normalized Drinfeld module $\phi\colon A\to \Lbar[\tau]$.
\end{lemma}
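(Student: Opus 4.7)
The plan is to take $\phi$ to be a conjugate $u\phi' u^{-1}$ for a suitable $u\in\Lbar^\times$, chosen so that the leading coefficient $\mu_\phi(\pi)$ at a fixed uniformizer $\pi\in F_\infty$ equals $\iota(\varepsilon(\pi))$, where $\iota\colon \FF_\infty\hookrightarrow L$ is the embedding induced by $\phi'$.

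A direct computation, using the relation $\tau^n a = a^{q^n}\tau^n$ in $L^\perf\twistedLaurent$ (valid for all $n\in\ZZ$ as $L^\perf$ is perfect) together with the commutativity of the field $L^\perf$, shows that for any $u\in\Lbar^\times$ and $\phi = u\phi' u^{-1}$ one has
\[
\mu_\phi(x) = \mu_{\phi'}(x)\cdot u^{1-q^{-rd_\infty\ord_\infty(x)}}
\]
for every $x\in F_\infty^\times$. Taking $x=\pi$, the equation $\mu_\phi(\pi) = \iota(\varepsilon(\pi))$ becomes, after raising to the $q^{rd_\infty}$ power, an equation of the form $u^{q^{rd_\infty}-1} = \beta$ for some explicit $\beta\in (L^\perf)^\times$. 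Such a $u$ exists in $\Lbar^\times$ since $\Lbar$ is algebraically closed.

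It remains to verify that $\mu_\phi = \iota\circ\varepsilon$ on all of $F_\infty^\times$ for this choice of $u$. Three observations do most of the work. (i) For $a\in\FF_\infty^\times$ we have $\ord_{\tau^{-1}}(\phi'_a)=0$ with constant term $\iota(a)$, and conjugation by $u$ fixes this constant term (as $L^\perf$ is commutative), so $\mu_\phi(a)=\iota(a)$. (ii) For $v\in 1+\mathfrak m_\infty$, write $v=1+m$ with $m\in\mathfrak m_\infty$; since $\phi$ is a ring homomorphism, $\phi_v=1+\phi_m$ with $\ord_{\tau^{-1}}(\phi_m)=rd_\infty\ord_\infty(m)\geq rd_\infty>0$, so $\mu_\phi(v)=1$. (iii) $\mu_\phi(\pi)=\iota(\varepsilon(\pi))$ by construction. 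Using the decomposition $F_\infty^\times = \pi^{\ZZ}\cdot \FF_\infty^\times\cdot (1+\mathfrak m_\infty)$ (via Teichm\"uller lifts) together with iterated application of (\ref{E:sign relation}), one then computes $\mu_\phi(x) = \iota(\varepsilon(x))$. The point that lets the Frobenius twist factors $y\mapsto y^{1/q^{rd_\infty\ord_\infty(\,\cdot\,)}}$ disappear is that $\iota(\FF_\infty)\subseteq \FF_{q^{rd_\infty}}$, so these twists act trivially on values already known to lie in $\iota(\FF_\infty^\times)$.

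The main obstacle I anticipate is conceptual rather than technical: carefully tracking the distinction between the abstract $\FF_\infty$ serving as codomain of $\varepsilon$ and the distinguished copy $\iota(\FF_\infty)\subset L$ arising from the Drinfeld module. In fact the argument above produces $\iota^{-1}\circ\mu_\phi = \varepsilon$ on the nose, which is slightly stronger than the ``up to a $k$-automorphism of $\FF_\infty$'' allowed by the statement; the extra flexibility in the definition is presumably there to accommodate different choices of $\iota$.
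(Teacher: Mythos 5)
Your argument is correct, and it is essentially the argument of the cited source: the paper itself does not prove this lemma but refers to Hayes \cite{MR1196509}*{\S12}, where the same normalization trick is used — conjugate by $u\in\Lbar^\times$, observe that the leading coefficient changes by $u^{1-q^{-rd_\infty\ord_\infty(x)}}$, solve the resulting equation $u^{q^{rd_\infty}-1}=\beta$ in the algebraically closed field, and propagate from the uniformizer to all of $F_\infty^\times$ using the twisted multiplicativity (\ref{E:sign relation}), the triviality of $\mu_\phi$ on $1+\mathfrak{m}_\infty$, and the fact that $\mu_\phi$ is the identity on $\FF_\infty^\times$ via the embedding induced by $\phi$ (which, as you note, is unchanged by conjugation since the $\tau^0$-coefficient is fixed). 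Your checks (i)--(iii) and the observation that the Frobenius twists act trivially on $\iota(\FF_\infty^\times)$ are exactly what is needed, so there is no gap.

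One small correction to your closing remark: the ``up to a $k$-automorphism of $\FF_\infty$'' in the definition of $\varepsilon$-normalized is not primarily about different choices of $\iota$. It is needed so that the set $X_\varepsilon$ is stable under the $*$-action and the Galois action: comparing leading coefficients in $\phi_\aA\phi_x=(\aA*\phi)_x\phi_\aA$ gives $\mu_{\aA*\phi}(x)=\mu_\phi(x)^{q^{\deg\phi_\aA}}$, and likewise $\mu_{\sigma(\phi)}=\sigma\circ\mu_\phi$, so even a module with $\mu_\phi=\iota\circ\varepsilon$ on the nose gets replaced by one whose sign is $\varepsilon$ composed with a power of Frobenius. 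That you can achieve $\iota^{-1}\circ\mu_\phi=\varepsilon$ exactly is true and slightly stronger than the statement, but the flexibility in the definition is still essential later.
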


\subsection{The action of an ideal on a Drinfeld module}
Fix a Drinfeld module $\phi\colon A \to L[\tau]$ and take a non-zero ideal $\aA$ of $A$.  Let $I_{\aA,\phi}$ be the \emph{left} ideal in $L[\tau]$ generated by the set $\{\phi_a: a \in \aA\}$.   All left ideals of $L[\tau]$ are principal, so $I_{\aA,\phi}=L[\tau]\cdot \phi_{\aA}$ for a unique monic polynomial $\phi_{\aA}\in L[\tau]$.  Using that $\aA$ is an ideal, we find that $I_{\aA,\phi} \phi_x \subseteq I_{\aA,\phi}$ for all $x\in A$.   Thus for each $x\in A$, there is a unique polynomial $(\aA*\phi)_x$ in $L[\tau]$ that satisfies 
 \begin{equation} \label{E:a isogeny}
 \phi_\aA\cdot \phi_x = (\aA*\phi)_x \cdot \phi_\aA.
 \end{equation}    
The map
 \[
 \aA* \phi \colon A \to L[\tau],\,\, x\mapsto (\aA* \phi)_x
 \]
 is also a Drinfeld module, and hence (\ref{E:a isogeny}) shows that $\phi_\aA$ is an isogeny from $\phi$ to $\aA* \phi$.  
 
\begin{lemma} \label{L:ideal action} \cite{MR1196509}*{\S4}
\begin{romanenum}
\item \label{I:ideal action, multiplication} Let $\aA$ and $\bB$ be non-zero ideals of $A$.  Then $\phi_{\aA\bB}=(\bB*\phi)_\aA\cdot \phi_\bB$ and $\aA*(\bB*\phi)=(\aA\bB)*\phi$.
\item \label{I:ideal action, principal}  Let $\aA=wA$ be a non-zero principal ideal of $A$.  Then $\phi_\aA=\mu_\phi(w)^{-1} \cdot \phi_w$ and  $(\aA*\phi)_x = \mu_\phi(w)^{-1}\cdot \phi_x\cdot  \mu_\phi(w)$ for all $x\in A$.
\end{romanenum}
\end{lemma}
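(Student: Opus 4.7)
The plan is to prove (ii) first directly from the definitions, and then bootstrap to (i).

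For (ii), I would first identify the left ideal $I_{wA,\phi}$ explicitly. For any $a\in A$, commutativity of $A$ together with the fact that $\phi$ is a ring homomorphism gives $\phi_{wa} = \phi_w\phi_a$, so $I_{wA,\phi}\subseteq L[\tau]\phi_w$; the reverse inclusion follows by taking $a=1$. Hence $I_{wA,\phi}=L[\tau]\phi_w$, and since the leading coefficient of $\phi_w$ in $\tau$ is $\mu_\phi(w)\in L^\times$, the monic generator is $\phi_{wA}=\mu_\phi(w)^{-1}\phi_w$. For the second formula I would substitute this expression into (\ref{E:a isogeny}), rewrite $\phi_w\phi_x = \phi_{wx} = \phi_x\phi_w$ using commutativity of $A$, right-multiply by $\phi_w^{-1}$ inside the skew field $L^\perf\twistedLaurent$ (valid since $\partial\phi_w=w\neq 0$), and finally right-multiply by the scalar $\mu_\phi(w)\in L^\times$, obtaining $(wA*\phi)_x = \mu_\phi(w)^{-1}\phi_x\mu_\phi(w)$.

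For (i), the main step is the explicit factorisation $\phi_{\aA\bB}=(\bB*\phi)_\aA\cdot\phi_\bB$. Both sides are monic in $L[\tau]$ (the right side being a product of monics), so it suffices to prove they generate the same left ideal. For the inclusion $I_{\aA\bB,\phi}\subseteq L[\tau]\cdot(\bB*\phi)_\aA\phi_\bB$, take $a\in\aA$ and $b\in\bB$ and write $\phi_{ab}=\phi_b\phi_a$; writing $\phi_b=\nu_b\phi_\bB$ for some $\nu_b\in L[\tau]$ (since $\phi_b\in I_{\bB,\phi}$) and using (\ref{E:a isogeny}) to rewrite $\phi_\bB\phi_a=(\bB*\phi)_a\phi_\bB$, then invoking $(\bB*\phi)_a\in L[\tau]\cdot(\bB*\phi)_\aA$, yields $\phi_{ab}\in L[\tau]\cdot(\bB*\phi)_\aA\phi_\bB$. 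For the reverse inclusion I would expand $(\bB*\phi)_\aA$ as a left $L[\tau]$-combination of elements $(\bB*\phi)_{a_j}$ with $a_j\in\aA$, use (\ref{E:a isogeny}) to rewrite $(\bB*\phi)_{a_j}\phi_\bB=\phi_\bB\phi_{a_j}$, and similarly expand $\phi_\bB$ in terms of $\phi_{b_i}$ with $b_i\in\bB$; the resulting expression is a sum of $\phi_{b_i}\phi_{a_j}=\phi_{b_ia_j}\in I_{\aA\bB,\phi}$.

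The functorial identity $\aA*(\bB*\phi)=(\aA\bB)*\phi$ then falls out formally from the first identity together with two applications of (\ref{E:a isogeny}): on one hand (\ref{E:a isogeny}) for $(\phi,\aA\bB)$ gives $\phi_{\aA\bB}\phi_x=((\aA\bB)*\phi)_x\phi_{\aA\bB}$, while substituting $\phi_{\aA\bB}=(\bB*\phi)_\aA\phi_\bB$ and applying (\ref{E:a isogeny}) first for $(\phi,\bB)$ and then for $(\bB*\phi,\aA)$ gives $(\bB*\phi)_\aA\phi_\bB\phi_x=(\aA*(\bB*\phi))_x(\bB*\phi)_\aA\phi_\bB$. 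Comparing the two and cancelling the common right factor $(\bB*\phi)_\aA\phi_\bB$, which is invertible in $L^\perf\twistedLaurent$, gives the desired equality for each $x\in A$. There is no deep obstacle here; the main care required is tracking left versus right multiplication consistently in the non-commutative ring $L[\tau]$ and applying (\ref{E:a isogeny}) correctly inside both the original module $\phi$ and the transformed module $\bB*\phi$.
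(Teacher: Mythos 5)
Your proof is correct. Note that the paper offers no proof of Lemma~\ref{L:ideal action} at all: it is quoted from Hayes \cite{MR1196509}*{\S4}, and your argument is essentially the standard one found there — identify $I_{wA,\phi}=L[\tau]\,\phi_w$ and rescale by the leading coefficient for (ii), and for (i) check that the monic polynomial $(\bB*\phi)_\aA\cdot\phi_\bB$ and $\phi_{\aA\bB}$ generate the same left ideal, shuttling $\phi_\bB$ past the $\phi_a$ via (\ref{E:a isogeny}), with the identity $\aA*(\bB*\phi)=(\aA\bB)*\phi$ then following by right cancellation of $\phi_{\aA\bB}$.

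One small correction: in (ii), the justification for right-multiplying by $\phi_w^{-1}$ should simply be that $\phi_w\neq 0$ (every non-zero element of the skew field $L^\perf\twistedLaurent$ is a unit, and $\phi$ is injective); the condition $\partial\phi_w=w\neq 0$ tacitly assumes generic characteristic, which the lemma does not, and in special characteristic $\partial\phi_w$ may well vanish. Equivalently, you can avoid the skew field altogether and cancel $\phi_w$ (respectively $\phi_{\aA\bB}$ in part (i)) on the right inside $L[\tau]$, which has no zero divisors. Also, when showing $I_{\aA\bB,\phi}\subseteq L[\tau]\cdot(\bB*\phi)_\aA\phi_\bB$ it is worth a sentence that a general element of $\aA\bB$ is a finite sum $\sum_i a_ib_i$, so additivity of $\phi$ reduces the claim to the products $ab$ you treat; this is immediate but should be said.
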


\begin{lemma} \label{L:ideal-Galois actions}
Let $\sigma\colon L\hookrightarrow L'$ be an embedding of fields.  Let $\sigma(\phi)\colon A \to L'[\tau]$ be the Drinfeld module for which $\sigma(\phi)_x = \sigma(\phi_x)$, where $\sigma$ acts on the coefficients of $L[\tau]$.  For each non-zero ideal $\aA$ of $A$, we have $\sigma(\aA*\phi)=\aA*\sigma(\phi)$ and $\sigma(\phi_\aA)=\sigma(\phi)_\aA$.
\end{lemma}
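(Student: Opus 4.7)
The plan is to observe that $\sigma$ extends to a ring homomorphism $L[\tau] \to L'[\tau]$, since the commutation rule $\tau \cdot a = a^q \tau$ in $L[\tau]$ maps to $\tau \cdot \sigma(a) = \sigma(a)^q \tau$, which is the commutation rule in $L'[\tau]$. This extended $\sigma$ sends $\phi_x$ to $\sigma(\phi)_x$ by the very definition of $\sigma(\phi)$, and it sends monic polynomials in $\tau$ to monic polynomials in $\tau$ (the leading coefficient is preserved). With this ring map in hand, the two equalities become essentially formal manipulations, so the main thing to verify is that $\sigma(\phi_\aA)$ is indeed the monic generator of the left ideal attached to $\aA$ and $\sigma(\phi)$.

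First I would prove the identity $\sigma(\phi)_\aA = \sigma(\phi_\aA)$. For one inclusion: for each $a \in \aA$, write $\phi_a = f_a \cdot \phi_\aA$ with $f_a \in L[\tau]$ (possible since $\phi_a \in I_{\aA,\phi}$), and apply $\sigma$ to obtain $\sigma(\phi)_a = \sigma(f_a) \cdot \sigma(\phi_\aA)$; hence $I_{\aA,\sigma(\phi)} \subseteq L'[\tau] \cdot \sigma(\phi_\aA)$. For the reverse inclusion: since $\phi_\aA$ lies in the left ideal generated by $\{\phi_a : a \in \aA\}$, there is an expression $\phi_\aA = \sum_i g_i \phi_{a_i}$ with $g_i \in L[\tau]$ and $a_i \in \aA$; applying $\sigma$ gives $\sigma(\phi_\aA) = \sum_i \sigma(g_i)\, \sigma(\phi)_{a_i} \in I_{\aA,\sigma(\phi)}$, whence $L'[\tau] \cdot \sigma(\phi_\aA) \subseteq I_{\aA,\sigma(\phi)}$. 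Since $\sigma(\phi_\aA)$ is monic, uniqueness of monic generators of left ideals in $L'[\tau]$ forces $\sigma(\phi_\aA) = \sigma(\phi)_\aA$.

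Next I would deduce $\sigma(\aA*\phi) = \aA*\sigma(\phi)$ from (\ref{E:a isogeny}). Applying $\sigma$ to the defining relation $\phi_\aA \cdot \phi_x = (\aA*\phi)_x \cdot \phi_\aA$ yields
\[
\sigma(\phi)_\aA \cdot \sigma(\phi)_x \;=\; \sigma\!\left((\aA*\phi)_x\right) \cdot \sigma(\phi)_\aA
\]
for every $x \in A$, where the identification $\sigma(\phi_\aA) = \sigma(\phi)_\aA$ from the previous step is used. Comparing with the defining relation $\sigma(\phi)_\aA \cdot \sigma(\phi)_x = (\aA*\sigma(\phi))_x \cdot \sigma(\phi)_\aA$ and cancelling $\sigma(\phi)_\aA$ on the right (legitimate since $L'[\tau]$ has no zero divisors), we get $\sigma((\aA*\phi)_x) = (\aA*\sigma(\phi))_x$ for all $x$, which is precisely the claim.

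I do not anticipate any genuine obstacle: the only substantive point is that the extension $\sigma\colon L[\tau] \to L'[\tau]$ is compatible with the twisted multiplication, and once that is granted the argument is a clean application of the universal property of $\phi_\aA$ as the monic generator of $I_{\aA,\phi}$. The mildly delicate bit is getting the direction of the inclusions right when identifying ideals, since $\sigma$ is only a ring homomorphism and not necessarily surjective, but the two-sided argument above handles this cleanly.
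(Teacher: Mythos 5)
Your proposal is correct and follows essentially the same route as the paper: the paper's proof notes that the left ideal of $L'[\tau]$ generated by $\sigma(I_{\aA,\phi})$ is $I_{\aA,\sigma(\phi)}$ (which you spell out via the two inclusions) to get $\sigma(\phi_\aA)=\sigma(\phi)_\aA$, and then applies $\sigma$ to (\ref{E:a isogeny}) exactly as you do. Your extra remarks on monicity and cancellation in $L'[\tau]$ just make explicit what the paper leaves implicit.
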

\begin{proof}
The left ideal of $L'[\tau]$ generated by $\sigma(I_{\aA,\phi})$ is $I_{\aA,\sigma(\phi)}$, and hence $\sigma(\phi_\aA)=\sigma(\phi)_\aA$.  Applying $\sigma$ to (\ref{E:a isogeny}), we have
\[
\sigma(\phi)_\aA\sigma(\phi)_x = \sigma(\phi_\aA\cdot \phi_x) = \sigma((\aA*\phi)_x \cdot \phi_\aA) = \sigma(\aA*\phi)_x \sigma(\phi)_\aA,
\]
for all $x\in A$.  This shows that $\sigma(\aA*\phi)=\aA*\sigma(\phi)$.
\end{proof}

\subsection{Hayes modules} \label{SS:Hayes}
Fix a sign function $\varepsilon$ for $F_\infty$.  Let $\CC_\infty$ be the completion of an algebraic closure $\Fbar_\infty$ of $F_\infty$ with respect to the $\infty$-adic norm; it is both complete and algebraically closed.

\begin{definition} 
  A \defi{Hayes module} for $\varepsilon$ is a Drinfeld module $\phi\colon A \to \CC_\infty[\tau]$ of rank 1 that is $\varepsilon$-normalized and for which $\partial \circ \phi\colon A \to \CC_\infty$ is the inclusion map.  Denote by $X_\varepsilon$ the set of Hayes modules for $\varepsilon$.
\end{definition}

We know that Hayes modules exist because Drinfeld $A$-modules over $\CC_\infty$ of rank 1 can be constructed analytically \cite{MR0384707}*{\S3} and then we can apply Lemma~\ref{L:normalization works}.\footnote{In our construction of the inverse of the Artin map, this is the only part that we have not made explicit.  It is analogous to how one analytically constructs an elliptic curve with complex multiplication by the ring of integers $\OO_K$ of a quadratic imaginary field $K$ [The quotient $\CC/\OO_K$ gives such an elliptic curve over $\CC$.  We can then compute the $j$-invariant to high enough precision to identify what algebraic integer it is (it belongs to the Hilbert class field of $K$, so we know its degree over $\QQ$).].  The current version of \texttt{Magma} \cite{Magma} has a function \texttt{AnalyticDrinfeldModule} that can compute rank 1 Drinfeld modules that are defined over the maximal abelian extension $H_A$ of $F$ that is unramified at all places and splits completely at $\infty$.}

Take any Hayes module $\phi\in X_\varepsilon$.   Using that $\phi_\aA \in \CC_\infty[\tau]$ is monic along with (\ref{E:a isogeny}), we see that the Drinfeld module $\aA*\phi$ also belongs to $X_\varepsilon$.  By Lemma~\ref{L:ideal action}(\ref{I:ideal action, multiplication}), we find that the group $\mathcal{I}$ of fractional ideals of $A$ acts on $X_\varepsilon$.   Let $\mathcal{P}^+$ be the subgroup of principal fractional ideals generated by those $x\in F^\times$ that satisfy $\varepsilon(x)=1$.   The group $\mathcal{P}^+$ acts trivially on $X_\varepsilon$ by Lemma~\ref{L:ideal action}(\ref{I:ideal action, principal}), and hence induces an action of the finite group $\Pic^+(A):=\mathcal{I}/\mathcal{P}^+$ on $X_\varepsilon$.

\begin{prop} \label{P:homogenous space} \cite{MR1196509}*{\S13}
The set $X_\varepsilon$ is a principal homogeneous space for $\Pic^+(A)$ under the $*$ action.
\end{prop}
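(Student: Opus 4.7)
The plan is to establish that the $*$-action of $\mathcal{I}$ on $X_\varepsilon$ is well-defined, that $X_\varepsilon$ is non-empty, and that the induced action of the finite group $\Pic^+(A) = \mathcal{I}/\mathcal{P}^+$ on $X_\varepsilon$ is both transitive and free.  Non-emptiness has already been invoked via the analytic construction plus Lemma~\ref{L:normalization works}.  That $\aA * \phi \in X_\varepsilon$ for $\phi \in X_\varepsilon$ follows from the monicity of $\phi_\aA$ together with (\ref{E:a isogeny}): a short leading-$\tau$-coefficient calculation shows $\aA * \phi$ is again $\varepsilon$-normalized, and $\partial \circ (\aA * \phi)$ inherits the inclusion property from $\phi$.

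For transitivity, fix $\phi, \phi' \in X_\varepsilon$ and first produce any isogeny $f \in \CC_\infty[\tau]$ from $\phi$ to $\phi'$.  The cleanest source of such an isogeny is the analytic uniformization of rank $1$ Drinfeld modules over $\CC_\infty$: each corresponds to a rank $1$ discrete $A$-lattice in $\CC_\infty$ (up to scaling, a fractional ideal of $A$), and an inclusion of lattices yields an isogeny via the Drinfeld exponential.  The kernel of $f$, viewed as a finite $A$-submodule of $\CC_\infty$ through $\phi$, must equal $\phi[\aA]$ for some nonzero ideal $\aA \subseteq A$; since $\phi_\aA$ has the same kernel, we may write $f = u\,\phi_\aA$ for a unique $u \in \CC_\infty^\times$.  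Substituting into $f\phi_x = \phi'_x f$ and applying (\ref{E:a isogeny}) yields $u(\aA * \phi)_x u^{-1} = \phi'_x$ for all $x \in A$.  Comparing leading $\tau$-coefficients and using that $\aA*\phi$ and $\phi'$ are both $\varepsilon$-normalized pins $u$ down; a direct calculation then shows that after replacing $\aA$ by $\aA\cdot(w)$ for an appropriate $w \in F^\times$ with $\varepsilon(w)=1$---which does not alter the class in $\Pic^+(A)$---the conjugation by $u$ becomes trivial and $\aA * \phi = \phi'$ on the nose.

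For freeness, suppose $\aA * \phi = \phi$.  Then $\phi_\aA \phi_x = \phi_x \phi_\aA$ for all $x \in A$, so $\phi_\aA$ is an endomorphism of $\phi$.  The standard computation---via either the analytic theory or the action on a Tate module---gives $\End(\phi) = \phi(A)$ for a generic-characteristic rank $1$ Drinfeld module, so $\phi_\aA = \phi_a$ for some nonzero $a \in A$.  The left side is monic by definition, while the leading $\tau$-coefficient of the right side is $\mu_\phi(a)$, forcing $\mu_\phi(a) = 1$; since $\mu_\phi$ equals $\varepsilon$ composed with a $k$-automorphism of $\FF_\infty$, this gives $\varepsilon(a) = 1$.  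Lemma~\ref{L:ideal action}(\ref{I:ideal action, principal}) then yields $\phi_{aA} = \mu_\phi(a)^{-1}\phi_a = \phi_\aA$, and uniqueness of the monic generator of the left ideal $I_{\aA,\phi}$ in $\CC_\infty[\tau]$ forces $\aA = aA$, so $\aA \in \mathcal{P}^+$.

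The main technical obstacle is transitivity: the existence of the isogeny $f$ rests on the analytic theory of Drinfeld modules over $\CC_\infty$, and the subsequent bookkeeping needed to absorb the conjugating scalar $u$ into a modification of $\aA$ by a principal ideal in $\mathcal{P}^+$ is where the compatibility between the sign function and the $\tau$-leading-coefficient calculation does the real work.  Freeness, by contrast, is largely mechanical once the rank $1$ endomorphism ring calculation is in hand.
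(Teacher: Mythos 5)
The paper itself gives no argument for this proposition---it simply cites Hayes \cite{MR1196509}*{\S13}---so your proposal has to stand on its own. The well-definedness of the action, non-emptiness, and your freeness argument (endomorphism ring $=\phi(A)$ in rank $1$ and generic characteristic, monicity of $\phi_\aA$ forcing $\mu_\phi(a)=1$, hence $\varepsilon(a)=1$ and $\aA=(a)\in\mathcal{P}^+$) are all sound. The gap is in transitivity, at the final ``absorption'' step. If $\varepsilon(w)=1$, then $(w)\in\mathcal{P}^+$, and since $\aA*\phi$ is $\varepsilon$-normalized we have $\mu_{\aA*\phi}(w)=1$, so by Lemma~\ref{L:ideal action} we get $(\aA(w))*\phi=(w)*(\aA*\phi)=\aA*\phi$: replacing $\aA$ by $\aA\cdot(w)$ with $\varepsilon(w)=1$ changes nothing at all---this is precisely the already-noted triviality of the $\mathcal{P}^+$-action---so it cannot ``make the conjugation by $u$ trivial'' unless $u$ already centralizes $(\aA*\phi)(A)$, which is exactly what needs to be proved.

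What is actually required is the opposite move: a principal ideal $(w)$ with $\varepsilon(w)$ in general \emph{not} equal to $1$ (this changes the class in $\Pic^+(A)$, though not in $\Pic(A)$), chosen so that conjugation by $\mu_{\aA*\phi}(w)\in\FF_\infty^\times$ agrees with conjugation by $u$, i.e.\ so that $u\,\mu_{\aA*\phi}(w)$ lies in the central subgroup $k^\times$. The leading-coefficient comparison only yields relations of the form $u^{q^{n}-1}=\mu_{\aA*\phi}(x)\,\mu_{\phi'}(x)^{-1}$ with $n=-d_\infty\ord_\infty(x)$, and since ``$\varepsilon$-normalized'' permits different Frobenius twists $\sigma_1\circ\varepsilon$ and $\sigma_2\circ\varepsilon$ for the two modules, this a priori places $u$ only in $\kbar^\times$, not in $\FF_\infty^\times\cdot k^\times$. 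Showing that $u$ can be matched by some $\mu_{\aA*\phi}(w)$ with $w\in F^\times$ is the genuine content of Hayes' \S13; alternatively one can finish by a counting argument, using your freeness result, simple transitivity of $\Pic(A)$ on $\CC_\infty$-isomorphism classes via lattices, and the fact that each isomorphism class contains exactly $(q^{d_\infty}-1)/(q-1)=|\mathcal{P}/\mathcal{P}^+|$ elements of $X_\varepsilon$. As written, the transitivity proof does not close.
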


We now consider the arithmetic of the set $X_\varepsilon$.   Take any $\phi\in X_\varepsilon$ and choose a non-constant $y\in A$.  Let $H_A^+$ be the subfield of $\CC_\infty$ generated by $F$ and the coefficients of $\phi_y$ as a polynomial in $\tau$.    We call $H_A^+$ the \defi{normalizing field} for the triple $(F,\infty,\varepsilon)$.

\begin{lemma} \cite{MR1196509}*{\S14}
The extension $H_A^+/F$ is finite and normal, and depends only on the triple $(F,\infty,\varepsilon)$.   
\end{lemma}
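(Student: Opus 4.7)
The plan is to identify $H_{A,\phi,y}^+ = F(\text{coefficients of }\phi_y)$ with the fixed field of the stabilizer of $\phi$ under a natural action of $\Gal(F^\sep/F)$ on the finite set $X_\varepsilon$, and then exhibit this fixed field as independent of $\phi$ and of $y$. The key inputs will be the torsor structure on $X_\varepsilon$ (Proposition~\ref{P:homogenous space}) and the Galois-compatibility of the ideal action (Lemma~\ref{L:ideal-Galois actions}).

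The first step is to verify that $\Gal(F^\sep/F)$ acts on $X_\varepsilon$ by $\sigma\cdot\phi:=\sigma(\phi)$ (applying $\sigma$ coefficientwise). This presumes that the coefficients of every $\phi\in X_\varepsilon$ lie in $F^\sep$, which I would derive from the analytic construction of rank~$1$ Drinfeld modules over $\CC_\infty$ referenced in~\S\ref{SS:Hayes}: that construction produces at least one $\phi$ with algebraic coefficients, and Lemma~\ref{L:ideal action} together with Proposition~\ref{P:homogenous space} propagates algebraicity across all of $X_\varepsilon$. Once this is granted, one checks that $\sigma(\phi)\in X_\varepsilon$: the rank is preserved because $\sigma$ is injective and hence preserves the top $\tau$-degree of each $\phi_x$; the characteristic $\partial\circ\sigma(\phi)=\sigma\circ\partial\circ\phi$ is again the inclusion $A\hookrightarrow\CC_\infty$, since $\sigma$ fixes $F\supseteq A$; and $\mu_{\sigma(\phi)}=\sigma|_{\FF_\infty}\circ\mu_\phi$ is $\varepsilon$ composed with a $k$-automorphism of the unique copy of $\FF_\infty$ inside $\Fbar$, so $\varepsilon$-normalization survives.

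Let $L_\phi\subseteq F^\sep$ denote the field generated over $F$ by all coefficients of all $\phi_a$, $a\in A$. By Galois correspondence, $L_\phi$ equals the fixed field of $U_\phi:=\operatorname{Stab}_{\Gal(F^\sep/F)}(\phi)$. Since $X_\varepsilon$ is a torsor for the finite group $\Pic^+(A)$, the orbit of $\phi$ is finite, so $U_\phi$ has finite index in $\Gal(F^\sep/F)$ and $[L_\phi:F]<\infty$. For independence of $\phi$: given any $\phi'\in X_\varepsilon$, write $\phi'=\aA*\phi$ (Proposition~\ref{P:homogenous space}); Lemma~\ref{L:ideal-Galois actions} gives $\sigma(\phi')=\aA*\sigma(\phi)$, and freeness of the $\Pic^+(A)$-action yields $\sigma(\phi')=\phi'$ iff $\sigma(\phi)=\phi$, so $U_{\phi'}=U_\phi$ and $L_{\phi'}=L_\phi$. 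Normality is then immediate, since for each $\sigma$ we have $\sigma(L_\phi)=L_{\sigma(\phi)}=L_\phi$.

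Finally I must show $H_{A,\phi,y}^+=L_\phi$. The inclusion $H_{A,\phi,y}^+\subseteq L_\phi$ is clear, so by Galois correspondence it suffices to prove: if $\sigma\in\Gal(F^\sep/F)$ satisfies $\sigma(\phi_y)=\phi_y$, then $\sigma(\phi)=\phi$. Writing $\sigma(\phi)=\aA*\phi$ and applying~(\ref{E:a isogeny}), the hypothesis $(\aA*\phi)_y=\phi_y$ becomes $\phi_\aA\cdot\phi_y=\phi_y\cdot\phi_\aA$, i.e., $\phi_\aA$ lies in the centralizer of $\phi_y$ in $\CC_\infty[\tau]$. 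The main obstacle is to control this centralizer: I expect to show that the centralizer of $\phi_y$ for a rank-$1$ Hayes module is exactly $\phi(A)$, forcing $\phi_\aA=\phi_w$ for some $w\in A$; then monicity of $\phi_\aA$ together with Lemma~\ref{L:ideal action}(\ref{I:ideal action, principal}) (so $\phi_{wA}=\mu_\phi(w)^{-1}\phi_w$) gives $\mu_\phi(w)=1$, hence $\varepsilon(w)=1$, hence $\aA*\phi=(wA)*\phi=\phi$. The centralizer computation is most naturally attacked through the extension $\phi\colon F_\infty\hookrightarrow\CC_\infty\twistedLaurent$ of~\S\ref{S:background}: commuting with $\phi_y$ imposes a recursion on the $\tau^{-1}$-expansion, and integrality of $A$ over $\FF_\infty[y]$ should pin the centralizer down to $\phi(A)$. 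Establishing this centralizer description—passing from ``commutes with a single $\phi_y$'' to ``lies in $\phi(A)$''—is the most delicate step of the argument.
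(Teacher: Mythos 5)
The paper does not actually prove this lemma; it quotes it from Hayes \cite{MR1196509}*{\S14}, so your argument has to be judged against the standard one rather than against anything in the text. Your overall skeleton --- orbit--stabilizer for an action on the finite torsor $X_\varepsilon$, independence of $\phi$ via Lemma~\ref{L:ideal-Galois actions} and freeness of the $\Pic^+(A)$-action, and a centralizer computation to pass from the coefficients of a single $\phi_y$ to all of $\phi$ --- is the right shape, but there is a genuine gap at your very first step. Letting $\Gal(F^\sep/F)$ act on $X_\varepsilon$ coefficientwise presupposes that every Hayes module has coefficients that are \emph{separably algebraic} over $F$, and that is essentially the content of the lemma, not an available input: a Hayes module is a priori just a Drinfeld module over $\CC_\infty$. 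Your justification --- that the analytic construction ``produces at least one $\phi$ with algebraic coefficients'' --- is unsupported; the analytic uniformization produces coefficients in $\CC_\infty$ as lattice sums with no algebraicity statement, and algebraicity is exactly the theorem being proved (the function-field analogue of the algebraicity of CM $j$-invariants). The standard repair is to act with the full group of field automorphisms of $\CC_\infty$ over $F$ (any such automorphism visibly preserves the defining conditions of $X_\varepsilon$: rank $1$, $\partial\circ\phi$ the inclusion, and $\varepsilon$-normalization up to a $k$-automorphism of $\FF_\infty$); finiteness of $X_\varepsilon$ then shows each coefficient of $\phi_y$ has finite orbit, hence is algebraic over $F$, and stability of $H_A^+$ under all these automorphisms gives finiteness and normality. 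Note that the lemma claims only ``finite and normal''; separability, which your Galois-correspondence bookkeeping inside $F^\sep$ silently requires, is an additional fact needing its own argument and is nowhere addressed in your write-up, so as it stands the framework is circular.

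The final step is also not yet a proof. The claim that the centralizer of $\phi_y$ in $\CC_\infty[\tau]$ is exactly $\phi(A)$ is true, and it is indeed what forces $H_A^+$ to be independent of $y$ and equal to the field generated by all coefficients of $\phi$; but you only conjecture it (``should pin the centralizer down''). The clean argument is that this centralizer is the endomorphism ring of $\phi$ regarded as a Drinfeld $k[y]$-module of generic characteristic and rank $[F:k(y)]$: such a ring is commutative, contains $\phi(A)$, and consists of elements integral over $k[y]$, hence lies in the integral closure of $k[y]$ in $F$, which equals $A$ because the non-constant $y\in A$ has its only pole at $\infty$. (Your phrase ``integrality of $A$ over $\FF_\infty[y]$'' does not parse: $\FF_\infty$ need not embed in $F$; the relevant base ring is $k[y]$.) Granting that fact, your deduction $\phi_\aA=\phi_w$, $\mu_\phi(w)=1$, $\varepsilon(w)=1$, $\aA*\phi=\phi$ is fine. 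So the two load-bearing facts --- separable algebraicity of the coefficients and the centralizer computation --- are respectively assumed and sketched, and the first of these is where the real content of Hayes' \S14 lies.
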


So for every $\phi\in X_\varepsilon$, we have $\phi(A)\subseteq H^+_A[\tau]$.   From now on, we shall view $\phi$ as a Drinfeld module $\phi\colon A\to H^+_A[\tau]$.    Moreover, we actually have $\phi(A)\subseteq B[\tau]$ where $B$ is the integral closure of $A$ in $H_A^+$.  Since $\phi$ is normalized, we find that $\phi$ has \defi{good reduction} at every place of $H_A^+$ not lying over $\infty$ (for each non-zero prime ideal $\Pp$ of $B$, we can compose $\phi$ with a reduction modulo $\Pp$ map to obtain a Drinfeld module of rank 1 over $B/\Pp$).

There is thus a natural action of the Galois group $\Gal(H_A^+/F)$ on $X_\varepsilon$.   With a fixed $\phi \in X_\varepsilon$,  Proposition~\ref{P:homogenous space} implies that there is a unique function $\psi\colon \Gal(H_A^+/F) \hookrightarrow \Pic^+(A)$ such that $\sigma(\phi)=\psi(\sigma)*\phi$.   Lemma~\ref{L:ideal action} and Lemma~\ref{L:ideal-Galois actions} imply that $\psi$ is a group homomorphism that does not depend on the initial choice of $\phi$.  A consequence of following important proposition is that $\psi$ is surjective, and hence an isomorphism.

\begin{prop} \label{P:Frob for HA+}
The extension $H_A^+/F$ is unramified away from $\infty$.  For each non-zero prime ideal $\p$ of $A$,  the class  $\psi(\Frob_\p)$ of $\Pic^+(A)$ is the one containing $\p$.
\end{prop}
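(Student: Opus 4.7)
My plan is to prove both assertions simultaneously by reducing each to a common rigidity claim for Hayes modules under reduction.

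\emph{Key reduction identity.} Fix a prime $\Pp$ of $B$ above $\p$. Since $\phi$ has good reduction at $\Pp$, the reduction $\bar\phi\colon A\to(B/\Pp)[\tau]$ is a rank 1 Drinfeld module of $A$-characteristic $\p$. The first step is the identity $\bar\phi_\p=\tau^{\deg\p}$. Indeed, $\bar\phi_\p$ is monic of $\tau$-degree $\deg\p$, hence of $X$-degree $N(\p)$, with roots forming the $\p$-torsion $\bar\phi[\p]$; the standard fact that a rank 1 Drinfeld module of characteristic $\p$ has trivial $\p$-torsion forces $\bar\phi_\p(X)=X^{N(\p)}$, i.e., $\bar\phi_\p=\tau^{\deg\p}$. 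Reducing (\ref{E:a isogeny}) modulo $\Pp$ and applying $\tau^{\deg\p}\cdot c=c^{N(\p)}\tau^{\deg\p}$ then yields $\overline{\p*\phi}=\bar\phi^{(N(\p))}$, where ${}^{(N(\p))}$ denotes raising coefficients to the $N(\p)$-th power.

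\emph{Rigidity.} The technical heart is the rigidity claim: if $\mathfrak c\in\Pic^+(A)$ satisfies $\overline{\mathfrak c*\phi'}=\overline{\phi'}$ for some $\phi'\in X_\varepsilon$, then $\mathfrak c$ is trivial. To prove it, pick an integral ideal $\mathfrak c_0$ in the class of $\mathfrak c$ coprime to $\p$, so the isogeny $\phi'_{\mathfrak c_0}\colon\phi'\to\mathfrak c_0*\phi'$ is \'etale at $\Pp$ and its reduction $\bar\phi'_{\mathfrak c_0}$ is a separable endomorphism of $\bar\phi'$. The separable endomorphisms of a rank 1 characteristic-$\p$ Drinfeld module are exactly the $\bar\phi'_a$ for $a\in A\setminus\p$, so $\bar\phi'_{\mathfrak c_0}=\bar\phi'_c$ for some such $c$; comparing leading coefficients (monic on the left, $\varepsilon(c)$ on the right) gives $\varepsilon(c)=1$, whence $(c)\in\mathcal P^+$. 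Passing to kernels and using \'etaleness at $\Pp$ of both torsions to lift equalities up to $F^\sep$, we get $\phi'[\mathfrak c_0]=\phi'[(c)]$ in $\GG_a(F^\sep)$; taking annihilators yields $\mathfrak c_0=(c)$, so $\mathfrak c$ is trivial in $\Pic^+(A)$.

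\emph{Deductions and main obstacle.} Granted rigidity, both assertions follow. For unramifiedness: if $\sigma$ is in the inertia at $\p$ in $\Gal(H_A^+/F)$, then $\sigma$ acts trivially on $B/\Pp$, so $\overline{\sigma(\phi)}=\bar\phi$; writing $\sigma(\phi)=\psi(\sigma)*\phi$, rigidity forces $\psi(\sigma)$ trivial, so $\sigma=1$ by injectivity of $\psi$. Hence $H_A^+/F$ is unramified at $\p$. Since $\psi$ embeds $\Gal(H_A^+/F)$ into the abelian $\Pic^+(A)$, the extension is also abelian, so $\Frob_\p$ is well-defined and acts on $B/\Pp$ as $x\mapsto x^{N(\p)}$; thus $\overline{\Frob_\p(\phi)}=\bar\phi^{(N(\p))}=\overline{\p*\phi}$, and applying rigidity to $\phi':=\p*\phi$ after writing $\Frob_\p(\phi)=\mathfrak c*(\p*\phi)$ yields $\psi(\Frob_\p)=[\p]$. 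The main obstacle is the rigidity step itself, in particular the structure of separable endomorphisms of characteristic-$\p$ rank 1 Drinfeld modules together with the bookkeeping of leading coefficients under normalization---the essential input that the theory of Hayes modules provides to make the Frobenius identification sharp.
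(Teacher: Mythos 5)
The paper itself offers no proof of this proposition: like the neighbouring statements of \S\ref{S:background}, it is background quoted from Hayes's theory of sign-normalized rank one Drinfeld modules (\cite{MR1196509}, \cite{MR535766}), so there is no in-paper argument to compare with yours line by line. What you have written is essentially the standard Hayes argument, and its structure is sound: the identity $\bar\phi_\p=\tau^{\deg\p}$ (monicity plus trivial $\p$-torsion in characteristic $\p$ for rank one), the consequence $\overline{\p*\phi}=\bar\phi^{(N(\p))}$ obtained by reducing (\ref{E:a isogeny}), and a rigidity statement amounting to injectivity of reduction modulo $\Pp$ on $X_\varepsilon$, from which triviality of inertia and $\psi(\Frob_\p)=[\p]$ follow exactly as you describe (using that $\Gal(H_A^+/F)$ is abelian, via the injection $\psi$, so that inertia and Frobenius at the various $\Pp\mid\p$ are unambiguous).

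Two of the inputs you invoke carry the real weight and should be justified or cited rather than asserted. First, the \'etaleness of $\phi'_{\mathfrak{c}_0}$ at $\Pp$: this is true, and follows quickly by choosing $a\in\mathfrak{c}_0$ with $a\equiv 1\pmod{\p}$, writing $\phi'_a=g\,\phi'_{\mathfrak{c}_0}$ with $g$ integral (division by a monic twisted polynomial preserves integrality), and using $a=\partial(\phi'_a)=\partial(g)\,\partial(\phi'_{\mathfrak{c}_0})$, so that $\partial(\phi'_{\mathfrak{c}_0})$ is a $\Pp$-unit. Second, the claim that the separable endomorphisms of the reduction are exactly the $\bar\phi'_a$ with $a\notin\p$ is the statement $\End(\bar\phi')=\bar\phi'(A)$; it is standard but not formal (faithfulness of the Tate module at a place $\lambda\neq\p,\infty$ gives commutativity and $A$-rank one, and finiteness of $\End$ as an $A$-module together with $A$ being integrally closed in $F$ gives equality). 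Likewise, the lifting step $\phi'[\mathfrak{c}_0]=\phi'[(c)]$ needs that reduction is a bijection on prime-to-$\p$ torsion (injective because the product of the nonzero roots of $\phi'_a$ is a $\Pp$-unit, surjective by counting), and the leading-coefficient comparison actually gives $\theta(\varepsilon(c))=1$ for the $k$-automorphism $\theta$ of $\FF_\infty$ allowed in the definition of $\varepsilon$-normalized, which still forces $\varepsilon(c)=1$ because reduction is injective on the roots of unity in $\FF_\infty^\times$. With these points made precise, your proof is complete and coincides with the argument the paper is implicitly relying on.
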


\section{Construction of the inverse of the Artin map} \label{S:construction}

Fix a place $\infty$ of $F$.  Throughout this section, we also fix a sign function $\varepsilon$ for $F_\infty$ and a Hayes module $\phi \in X_\varepsilon$ (as described in the previous section).    Let $F_\infty^+$ be the open subgroup of $F_\infty^\times$ consisting of those $x\in F_\infty^\times$ for which $\varepsilon(x)=1$.   So as not to clutter the construction, all the lemmas of \S\ref{S:construction} will be proved in \S\ref{S:lemma proofs}.

\subsection{$\lambda$-adic representations}
Fix a place $\lambda\neq \infty$ of $F$; we shall also denote by $\lambda$ the corresponding maximal ideal of $A$.   Take any automorphism $\sigma \in \Gal_F$.   Since the map $\psi$ of \S\ref{SS:Hayes} is surjective, we can choose a non-zero ideal $\aA$ of $A$ for which $\sigma(\phi)=\aA * \phi$.

For each positive integer $e$, let $\phi[\lambda^e]$ be the set of $b\in \Fbar$ that satisfy $\phi_x(b)=0$ for all $x\in \lambda^e$; equivalently, $\phi[\lambda^e]$ is the set of $b\in \Fbar$ such that $\phi_{\lambda^e}(b)=0$ (recall that we can identify each element of $L[\tau]$ with a unique polynomial $\sum_{i\geq 0} c_i X^{q^i} \in L[X]$).   We have $\phi[\lambda^e]\subseteq F^\sep$ since the polynomials $\phi_x$ are separable for all $x\in \lambda^e$.  Using the $A$-module structure coming from $\phi$, we find that $\phi[\lambda^e]$ is an $A/\lambda^e$-module of rank 1.   The \defi{$\lambda$-adic Tate module} of $\phi$ is defined to be
\[
T_\lambda(\phi)=\Hom_A(F_\lambda/\OO_\lambda,\, \phi[\lambda^\infty])
\]
where $\phi[\lambda^\infty]= \cup_{e\geq 1} \phi[\lambda^e]$.   The Tate module $T_\lambda(\phi)$ is a free $\OO_\lambda$-module of rank $1$, and hence $V_\lambda(\phi):=F_\lambda\otimes_{\OO_\lambda} T_\lambda(\phi)$ is a one-dimensional vector space over $F_\lambda$.   

For each $e$, the map $\phi[\lambda^e]\to \sigma(\phi)[\lambda^e],\, \xi\mapsto \sigma(\xi)$ is an isomorphism of $A/\lambda^e$-modules.  Combining over all $e$, we obtain an isomorphism $V_\lambda(\sigma)\colon V_\lambda(\phi)\to V_\lambda(\sigma(\phi))$ of $F_\lambda$-vector spaces.   

The isogeny $\phi_\aA$ from $\phi$ to $\aA*\phi$ induces a homomorphism $\phi[\lambda^e]\to (\aA*\phi)[\lambda^e]$ of $A/\lambda^e$-module for each $e$.  Combining together, we obtain an isomorphism $V_\lambda(\phi_\aA)\colon V_\lambda(\phi)\to V_\lambda(\aA*\phi)$ of $F_\lambda$-vector spaces.    

Using our assumption $\sigma(\phi)=\aA*\phi$, the map $V_\lambda(\phi_\aA)^{-1}\circ V_\lambda(\sigma)$ belongs to $\Aut_{F_\lambda}(V_\lambda(\phi))=F_\lambda^\times$; we denote this element of $F_\lambda^\times$ by $\rho_\lambda^{\aA}(\sigma)$. 

\begin{lemma}  \label{L:lambda basics}
\begin{romanenum}
\item \label{I:lambda basics-mult}
Take $\sigma,\gamma \in \Gal_F$ and fix ideals $\aA$ and $\bB$ of $A$ such that $\sigma(\phi)=\aA*\phi$ and $\gamma(\phi)=\bB*\phi$.   Then $(\sigma\gamma)(\phi)=(\aA\bB)*\phi$ and $\rho^{\aA\bB}_\lambda(\sigma\gamma)=\rho^{\aA}_\lambda(\sigma) \rho^{\bB}_\lambda(\gamma)$.
\item \label{I:lambda basics-defined}
Take $\sigma\in \Gal_F$ and fix ideals $\aA$ and $\bB$ of $A$ such that $\sigma(\phi)=\aA*\phi=\bB*\phi$. Then $\rho_\lambda^{\aA}(\sigma) \rho_\lambda^{\bB}(\sigma)^{-1}$ is the unique generator $w\in F^\times$ of the fractional ideal $\bB\aA^{-1}$ that satisfies $\varepsilon(w)=1$.
\item \label{I:valuation of automorphism}
Take $\sigma\in \Gal_F$ and fix an ideal $\aA$ such that $\sigma(\phi)=\aA*\phi$.    Identifying $\lambda$ with a non-zero prime ideal of $A$, let $f\geq 0$ be the largest power of $\lambda$ dividing $\aA$.  Then $\ord_\lambda(\rho^{\aA}_\lambda(\sigma))=-f$.
\end{romanenum}
\end{lemma}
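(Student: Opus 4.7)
My plan is to treat the three parts separately, using throughout the fact that each $V_\lambda$ is a one-dimensional $F_\lambda$-vector space, so every isomorphism between such spaces (once bases are chosen) is determined by a scalar, and the relations we must verify all come from commutative diagrams of such scalars. To avoid ambiguity I'll write $V_\lambda(\sigma,\psi)\colon V_\lambda(\psi)\to V_\lambda(\sigma(\psi))$ to specify the Drinfeld module on which $\sigma$ is acting.

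For part (\ref{I:lambda basics-mult}), I first compute $(\sigma\gamma)(\phi)=\sigma(\bB*\phi)=\bB*\sigma(\phi)=\bB*(\aA*\phi)=(\aA\bB)*\phi$, where the second equality is Lemma~\ref{L:ideal-Galois actions} and the last is Lemma~\ref{L:ideal action}(\ref{I:ideal action, multiplication}). The multiplicativity of $\rho_\lambda$ then follows from a short diagram chase. The key input is the Galois-equivariance $\sigma(\phi_\bB)=\sigma(\phi)_\bB=(\aA*\phi)_\bB$ from Lemma~\ref{L:ideal-Galois actions}, which on Tate modules reads
\[
V_\lambda(\sigma,\bB*\phi)\circ V_\lambda(\phi_\bB)\;=\;V_\lambda((\aA*\phi)_\bB)\circ V_\lambda(\sigma,\phi).
\]
Combining this with the factorization $V_\lambda(\phi_{\aA\bB})=V_\lambda((\aA*\phi)_\bB)\circ V_\lambda(\phi_\aA)$ coming from $\phi_{\aA\bB}=(\aA*\phi)_\bB\cdot\phi_\aA$ (Lemma~\ref{L:ideal action}(\ref{I:ideal action, multiplication})) and the chain rule $V_\lambda(\sigma\gamma,\phi)=V_\lambda(\sigma,\bB*\phi)\circ V_\lambda(\gamma,\phi)$, substituting into the defining formula for $\rho_\lambda^{\aA\bB}(\sigma\gamma)$ collapses it to $\rho_\lambda^\aA(\sigma)\cdot V_\lambda(\phi_\bB)^{-1}\circ V_\lambda(\gamma,\phi)=\rho_\lambda^\aA(\sigma)\rho_\lambda^\bB(\gamma)$, using that scalars commute with $F_\lambda$-linear maps.

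For part (\ref{I:lambda basics-defined}), since $\aA*\phi=\bB*\phi$, Proposition~\ref{P:homogenous space} places $\aA$ and $\bB$ in the same class of $\Pic^+(A)$, so $\bB\aA^{-1}=(w)$ for some $w\in F^\times$ with $\varepsilon(w)=1$; uniqueness of $w$ follows because two such generators differ by an element of $A^\times=k^\times\subseteq\FF_\infty^\times$, on which $\varepsilon$ is the identity. Because $\phi$ is $\varepsilon$-normalized, $\mu_\phi(w)=\varepsilon(w)=1$, so Lemma~\ref{L:ideal action}(\ref{I:ideal action, principal}) gives $(w)*\phi=\phi$ and $\phi_{(w)}=\phi_w$; hence $\phi_\bB=((w)*\phi)_\aA\cdot\phi_{(w)}=\phi_\aA\phi_w$. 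Since the $A$-action on $\phi[\lambda^\infty]$ is by $\phi$, the map $V_\lambda(\phi_w)$ is multiplication by $w$ on $V_\lambda(\phi)$, which yields $\rho_\lambda^\bB(\sigma)=w^{-1}\rho_\lambda^\aA(\sigma)$, i.e.\ $\rho_\lambda^\aA(\sigma)\rho_\lambda^\bB(\sigma)^{-1}=w$. For part (\ref{I:valuation of automorphism}), I fix generators of $T_\lambda(\phi)$ and $T_\lambda(\aA*\phi)$ to identify each $V_\lambda$ with $F_\lambda$. The isomorphism $V_\lambda(\sigma,\phi)$ sends the integral lattice $T_\lambda(\phi)$ onto $T_\lambda(\sigma(\phi))$ and so is multiplication by a unit in $\OO_\lambda^\times$. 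The kernel of $\phi_\aA$ on $\phi[\lambda^n]$ is the $\lambda$-primary part of $\phi[\aA]$, namely $\phi[\lambda^{\min(f,n)}]$, so for $n\geq f$ the image inside $(\aA*\phi)[\lambda^n]$ has index $|A/\lambda^f|$; passing to the inverse limit, $V_\lambda(\phi_\aA)$ is multiplication by an element of $\OO_\lambda$ of valuation exactly $f$. The ratio giving $\rho_\lambda^\aA(\sigma)$ therefore has valuation $-f$.

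I do not expect a serious obstacle: the conceptual content is concentrated in the Galois-equivariance of the ideal isogeny (Lemma~\ref{L:ideal-Galois actions}) and the free-rank-one structure of the Tate module. The only thing requiring care is the bookkeeping in part (\ref{I:lambda basics-mult}), specifically keeping track of which $V_\lambda(\sigma,-)$ originates from which Drinfeld module and where it lands.
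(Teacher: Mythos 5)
Your treatment of parts (\ref{I:lambda basics-mult}) and (\ref{I:valuation of automorphism}) is essentially identical to the paper's: part (i) rests on the equivariance $\sigma(\phi_\bB)=\sigma(\phi)_\bB$ from Lemma~\ref{L:ideal-Galois actions}, the factorization $\phi_{\aA\bB}=(\aA*\phi)_\bB\cdot\phi_\aA$, and the chain rule for $V_\lambda(\sigma\gamma)$, and part (iii) on counting the kernel of $\phi_\aA$ on $\lambda$-power torsion against the fact that $V_\lambda(\sigma)$ carries $T_\lambda(\phi)$ isomorphically onto $T_\lambda(\sigma(\phi))$; both computations are correct as written. The one step that needs repair is in (\ref{I:lambda basics-defined}): the generator $w$ of the fractional ideal $\bB\aA^{-1}$ need not lie in $A$, so $(w)$ is in general not an ideal of $A$, and neither Lemma~\ref{L:ideal action} nor the symbol $\phi_{(w)}$ is defined for it; your identity $\phi_\bB=((w)*\phi)_\aA\cdot\phi_{(w)}=\phi_\aA\phi_w$ is therefore not licensed by the cited lemmas unless $\aA$ happens to divide $\bB$. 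The paper sidesteps this by using that the class of $\bB\aA^{-1}$ lies in $\mathcal{P}^+$ to choose non-zero $w_1,w_2\in A$ with $(w_1)\aA=(w_2)\bB$ and $\varepsilon(w_1)=\varepsilon(w_2)=1$, applying Lemma~\ref{L:ideal action} to the integral ideals $(w_1),(w_2)$ to get $\phi_\aA\phi_{w_1}=\phi_\bB\phi_{w_2}$, and then concluding $\rho_\lambda^{\aA}(\sigma)\rho_\lambda^{\bB}(\sigma)^{-1}=V_\lambda(\phi_{w_1})\circ V_\lambda(\phi_{w_2})^{-1}=w_1w_2^{-1}=w$; with this one-line modification (set $w=w_1/w_2$) the rest of your argument, including the uniqueness of $w$ via $A^\times=k^\times$, goes through verbatim. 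A very minor further point: $\varepsilon$-normalized means $\mu_\phi=\theta\circ\varepsilon$ for some $k$-automorphism $\theta$ of $\FF_\infty$, not $\mu_\phi=\varepsilon$; since $\theta(1)=1$ this does not affect the conclusion $\mu_\phi(w_i)=1$.
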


By Lemma~\ref{L:lambda basics}(\ref{I:lambda basics-mult}), the map
\[
\rho_\lambda \colon \Gal_{H_A^+}\to \OO_\lambda^\times,\quad \sigma\mapsto \rho^{A}_\lambda(\sigma)
\]
is a homomorphism.   It is a continuous Galois representation and is unramified at all places not lying over $\lambda$ or $\infty$ (recall that $\phi$ has good reduction at all places not dividing $\infty$).   

\subsection{$\infty$-adic representation}   \label{SS:infty-rep}
By \S\ref{SS:normalization}, our Drinfeld module $\phi\colon A \to H_A^+[\tau]$ extends uniquely to a homomorphism $\phi\colon F_\infty \hookrightarrow (H_A^+)^\perf\twistedLaurent$ that satisfies (\ref{E:rank defn}) with $r=1$ for all $x\in F_\infty^\times$.    Recall that we defined a homomorphism $\deg \colon W_F \twoheadrightarrow \ZZ$ for which $\sigma(x)$ equals $x^{q^{\deg(\sigma)}}=\tau^{\deg(\sigma)} x \tau^{-\deg(\sigma)}$ for all $x\in \kbar$.   Given a series $u\in \Fbar\twistedLaurent$ with coefficients in $F^\sep$ and an automorphism $\sigma\in \Gal_F$, we define $\sigma(u)$ to be the series obtained by applying $\sigma$ to the coefficients of $u$.

\begin{lemma} \label{L:u facts}
\begin{romanenum}
\item \label{I:u exists}
There exists a series $u \in \Fbar\twistedseries^\times$ such that $u^{-1}\phi(F_\infty) u \subseteq \kbar\twistedLaurent$.  Any such $u$ has coefficients in $F^\sep$.
\item \label{I:u independence}
Fix any $u$ as in (\ref{I:u exists}). Take any $\sigma\in W_F$ and fix a non-zero ideal $\aA$ of $A$ for which $\sigma(\phi)=\aA*\phi$.  Then
\[
 \phi_{\aA}^{-1} \sigma(u) \tau^{\deg(\sigma)} u^{-1} \in \Fbar\twistedLaurent
\]
belongs to $\phi(F_\infty^+)$ and is independent of the choice of $u$.
\end{romanenum}
\end{lemma}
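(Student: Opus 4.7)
My plan is to prove (i) by constructing $u$ via iterative term-by-term matching of coefficients, then derive (ii) by a direct algebraic computation whose key ingredient is that, for $a \in \kbar\twistedLaurent$, the coefficient-wise $\sigma$-action equals $\tau^{\deg(\sigma)} a \tau^{-\deg(\sigma)}$, since $\tau \cdot c \cdot \tau^{-1} = c^q$ for $c \in \kbar$.

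For (i), set $d = d_\infty$ and fix a uniformizer $\pi$ of $F_\infty$ with $\varepsilon(\pi) = 1$, so that $\phi_\pi = \tau^{-d} + \sum_{i > d} a_i \tau^{-i}$ with $a_i \in (H_A^+)^\perf$. Since $F_\infty$ is topologically generated over $\FF_\infty$ by $\pi$, and the condition $u^{-1}\phi(F_\infty)u \subseteq \kbar\twistedLaurent$ is recovered from the single case $x = \pi$ by a centralizer argument on $F_\infty$-subfields of the skew field, it suffices to produce $u = 1 + \sum_{j \geq 1} u_j \tau^{-j}$ with $u^{-1}\phi_\pi u \in \kbar\twistedseries$. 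Writing $v := u^{-1}\phi_\pi u = \tau^{-d} + \sum_{k > d} v_k \tau^{-k}$ with $v_k \in \kbar$, matching the coefficient of $\tau^{-N}$ in $\phi_\pi u = u v$ yields, at each step $N > d$, an equation
\[
u_{N-d}^{q^{-d}} - u_{N-d} = v_N + K_N,
\]
where $K_N$ depends only on the previously chosen $u_j$, $v_k$, and the $a_i$, and where $v_N \in \kbar$ is free. The substitution $z = u_{N-d}^{q^{-d}}$ converts this into a polynomial of degree $q^d$ in $z$ with derivative $-1$, hence separable, which is solvable over $\Fbar$. The refinement that any such $u$ has coefficients in $F^\sep$ follows inductively by showing that the inseparable contributions to $K_N$ from the $a_i$'s can be absorbed by an appropriate choice of $v_N \in \kbar$, making the right-hand side lie in $F^\sep$ and thus forcing $u_{N-d} \in F^\sep$.

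For (ii), set $v_x := u^{-1}\phi_x u \in \kbar\twistedLaurent$ for $x \in F_\infty$. Using $\sigma(\phi_x) = (\aA * \phi)_x = \phi_\aA \phi_x \phi_\aA^{-1}$, compute
\[
\sigma(v_x) = \sigma(u)^{-1}\sigma(\phi_x)\sigma(u) = (\sigma(u)^{-1}\phi_\aA u)\, v_x\, (u^{-1}\phi_\aA^{-1}\sigma(u)).
\]
On the other hand, the coefficient-wise action of $\sigma$ on $v_x \in \kbar\twistedLaurent$ equals $\tau^{\deg(\sigma)} v_x \tau^{-\deg(\sigma)}$. Equating the two expressions shows $g := \phi_\aA^{-1}\sigma(u)\tau^{\deg(\sigma)} u^{-1}$ commutes with $\phi_x$ for every $x \in F_\infty$. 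Since $\phi(F_\infty)$ is a maximal commutative subfield of $\Fbar\twistedLaurent$, hence its own centralizer, $g \in \phi(F_\infty)$, so $g = \phi_y$ for a unique $y \in F_\infty^\times$. Reading off the leading coefficient in $\tau$ (using that $\phi_\aA$ is monic and $\sigma(u), u^{-1}$ start with $1$) gives $\mu_\phi(y) = 1$, hence $\varepsilon(y) = 1$ and $y \in F_\infty^+$. For independence from $u$, any other valid $u' = uh$ forces $h$ to lie in the subgroup generated by $\kbar\twistedseries^\times$ and $\phi(F_\infty)^\times$; for $h \in \kbar\twistedseries^\times$ the identity $\sigma(h) = \tau^{\deg(\sigma)} h \tau^{-\deg(\sigma)}$ yields $g(uh) = g(u)$, while for $h = \phi_z$ with $z \in F_\infty^\times$ the relation $\sigma(\phi_z) = \phi_\aA \phi_z \phi_\aA^{-1}$ together with commutativity of $\phi(F_\infty)$ gives $g(uh) = h\, g(u)\, h^{-1} = g(u)$.

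The main obstacle is the separability refinement in (i): the coefficients $a_i$ of $\phi_\pi$ a priori lie in $(H_A^+)^\perf$, which need not be contained in $F^\sep$, so the inductive construction must arrange for the inseparable contributions to $K_N$ to be cancelled by the free choice of $v_N \in \kbar$, leaving a separable equation for $u_{N-d}$ solvable in $F^\sep$. The remaining ingredients are routine algebraic manipulation, relying on the centralizer theorem for maximal commutative subfields of the skew Laurent series ring together with the identity $\tau^{\deg(\sigma)} a \tau^{-\deg(\sigma)} = \sigma(a)$ for $a \in \kbar\twistedLaurent$.
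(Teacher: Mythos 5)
Your part (ii) follows the paper's route (the identity $\sigma(a)=\tau^{\deg(\sigma)}a\tau^{-\deg(\sigma)}$ for $a\in\kbar\twistedLaurent$, plus the fact that $\phi(F_\infty)$ is its own centralizer), but part (i) has a genuine gap, and it is exactly at the point you flagged. You conjugate $\phi_\pi$ for a uniformizer $\pi$ of $F_\infty$; its coefficients lie in $(H_A^+)^\perf$ and are genuinely inseparable (already for the Carlitz module, $\phi_{1/t}=\tau^{-1}-\tau^{-1}t^{1/q}\tau^{-1}+\cdots$ involves $t^{1/q},t^{1/q^2},\dots$), and moreover, because $\phi_\pi$ is a series in $\tau^{-1}$, your recursion feeds in $q^{-i}$-th roots $u_j^{q^{-i}}$ of the previously constructed coefficients. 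Your proposed fix --- choose $v_N\in\kbar$ so that $v_N+K_N\in F^\sep$, forcing $u_{N-d}\in F^\sep$ --- cannot work: the inseparable terms in $K_N$ are not of the form (element of $\kbar$) plus (element of $F^\sep$), and in fact for the true solutions one has $v_N+K_N=u_{N-d}^{q^{-d}}-u_{N-d}$, which typically does \emph{not} lie in $F^\sep$ even when $u_{N-d}$ does (e.g.\ for Carlitz, $a_1^{1/q}\in F^\sep$ would give $t^{1/q}=a_1^{1/q}-a_1\in F^\sep$, a contradiction). So the recursion as you set it up gives no control on separability. The paper sidesteps this by choosing a nonconstant $y\in A$ with $\varepsilon(y)=1$ and solving $\phi_y u=u\tau^{h}$ with $h=-d_\infty\ord_\infty(y)$: here $\phi_y$ is a polynomial in $\tau$ with coefficients $b_j\in H_A^+\subseteq F^\sep$, so the recursion $a_i^{q^h}-a_i=-\sum_j b_j a_{i+j-h}^{q^j}$ involves only positive $q$-powers of earlier coefficients and separable equations, whence all $a_i\in F^\sep$; the containment $u^{-1}\phi(F_\infty)u\subseteq\kbar\twistedLaurent$ then falls out because the centralizer of $\tau^h$ is $k_h\twistedLaurent$. (Your idea of getting the full containment from a single element via a centralizer computation is sound --- the paper does the same --- but with $\tau^h$ the computation is immediate, whereas with a general element of $\kbar\twistedseries$ it needs an argument you do not supply.)

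There is a secondary gap: the clause ``any such $u$ has coefficients in $F^\sep$'' in (i), and the independence statement in (ii), require knowing how two admissible series $u,u'$ differ. You assert that $u'=uh$ with $h$ in the subgroup generated by $\kbar\twistedseries^\times$ and $\phi(F_\infty)^\times$, but give no justification, and your verification for the $\phi(F_\infty)^\times$ factor is glib. The paper obtains the precise statement $u'=uw$ with $w\in\kbar\twistedseries^\times$ by invoking Yu's Lemma~2.3: both embeddings $x\mapsto u^{-1}\phi_x u$ and $x\mapsto u'^{-1}\phi_x u'$ of $F_\infty$ into $\kbar\twistedLaurent$ are conjugate over $\kbar\twistedseries^\times$ to a standard embedding $\iota$, whose centralizer is $\iota(F_\infty)$ itself; independence in (ii) and the ``leading coefficient lies in $\kbar^\times$'' step (needed to see the image is in $\phi(F_\infty^+)$, not just $\phi(F_\infty)$) then follow. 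Some such comparison result must be proved or cited; it is not a routine manipulation.
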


Take $\sigma\in W_F$ and fix a non-zero ideal $\aA$ of $A$ such that $\sigma(\phi)=\aA*\phi$.  Choose a series $u \in \Fbar\twistedseries^\times$ as in Lemma \ref{L:u facts}(\ref{I:u exists}).
Using that $\phi\colon F_\infty \to (H_A^+)^\perf\twistedLaurent$ is injective and Lemma~\ref{L:u facts}(\ref{I:u independence}), we define $\rho_\infty^\aA(\sigma)$ to be the unique element of $F_\infty^+$ for which
\[
\phi\big(\rho_\infty^\aA(\sigma)\big)=\phi_{\aA}^{-1} \sigma(u) \tau^{\deg(\sigma)} u^{-1}.
\]
We now state some results about $\rho_\infty^\aA(\sigma)$; they are analogous to those concerning $\rho_\lambda^\aA(\sigma)$ in the previous section.

\begin{lemma} \label{L:infty basics}
\begin{romanenum}
\item \label{I:infty basics-mult}
Take $\sigma,\gamma \in W_F$ and fix ideals $\aA$ and $\bB$ of $A$ such that $\sigma(\phi)=\aA*\phi$ and $\gamma(\phi)=\bB*\phi$.   Then $(\sigma\gamma)(\phi)=(\aA\bB)*\phi$, and we have $\rho^{\aA\bB}_\infty(\sigma\gamma)=\rho^{\aA}_\infty(\sigma) \rho^{\bB}_\infty(\gamma)$.
\item \label{I:infty basics-defined}
Take $\sigma\in W_F$ and fix ideals $\aA$ and $\bB$ of $A$ such that $\sigma(\phi)=\aA*\phi=\bB*\phi$.  Then $\rho_\infty^{\aA}(\sigma) \rho_\infty^{\bB}(\sigma)^{-1}$ is the unique generator $w\in F^\times$ of the fractional ideal $\bB\aA^{-1}$ that satisfies $\varepsilon(w)=1$.
\end{romanenum}
\end{lemma}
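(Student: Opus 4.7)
The plan for both parts is to move from $F_\infty$ to its image $\phi(F_\infty^+)\subseteq (H_A^+)^\perf\twistedLaurent$ via the injective extension of $\phi$, working throughout with a single series $u$ from Lemma~\ref{L:u facts}(\ref{I:u exists}) inside the explicit formula $\phi(\rho_\infty^\aA(\sigma))=\phi_\aA^{-1}\sigma(u)\tau^{\deg\sigma}u^{-1}$.

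For part~(\ref{I:infty basics-mult}), I first check $(\sigma\gamma)(\phi)=(\aA\bB)*\phi$: applying $\sigma$ to $\gamma(\phi)=\bB*\phi$ gives $\sigma(\bB*\phi)=\bB*\sigma(\phi)$ by Lemma~\ref{L:ideal-Galois actions}, and then $\bB*(\aA*\phi)=(\aA\bB)*\phi$ by Lemma~\ref{L:ideal action}(\ref{I:ideal action, multiplication}). Next I expand
\[
\phi(\rho_\infty^\aA(\sigma))\phi(\rho_\infty^\bB(\gamma)) = \phi_\aA^{-1}\sigma(u)\tau^{\deg\sigma}u^{-1}\cdot \phi_\bB^{-1}\gamma(u)\tau^{\deg\gamma}u^{-1}
\]
and aim to push $u^{-1}\phi_\bB^{-1}\gamma(u)$ past $\tau^{\deg\sigma}$. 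The crucial identity is that for any $y\in\kbar\twistedLaurent$, $\tau^{\deg\sigma}y = \sigma(y)\tau^{\deg\sigma}$, since $\tau^n c=c^{q^n}\tau^n$ on $\kbar$ and $\sigma|_{\kbar}$ is the $q^{\deg\sigma}$-power Frobenius. To invoke this I need $u^{-1}\phi_\bB^{-1}\gamma(u)\in \kbar\twistedLaurent$, which follows from Lemma~\ref{L:u facts}(\ref{I:u exists}) applied to $\phi(\rho_\infty^\bB(\gamma))\in\phi(F_\infty)$: we get $u^{-1}\phi_\bB^{-1}\gamma(u)\tau^{\deg\gamma}\in\kbar\twistedLaurent$, and multiplying by $\tau^{-\deg\gamma}$ on the right preserves membership. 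After this substitution the cross terms $\sigma(u)\cdot\sigma(u)^{-1}$ cancel, and combining with $\phi_{\aA\bB}=\phi_{\bB\aA}=(\aA*\phi)_\bB\phi_\aA=\sigma(\phi_\bB)\phi_\aA$ (via Lemmas~\ref{L:ideal action}(\ref{I:ideal action, multiplication}) and~\ref{L:ideal-Galois actions}) the whole product collapses to $\phi(\rho_\infty^{\aA\bB}(\sigma\gamma))$, and injectivity of $\phi$ yields the claimed multiplicativity.

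For part~(\ref{I:infty basics-defined}), forming $\phi(\rho_\infty^\aA(\sigma))\phi(\rho_\infty^\bB(\sigma))^{-1}$ is far cleaner: the $u$-, $\sigma(u)$-, and $\tau^{\pm\deg\sigma}$-factors cancel in succession, leaving the simple expression $\phi_\aA^{-1}\phi_\bB$. I must identify this with $\phi(w)$, where $w$ is the unique generator of $\bB\aA^{-1}$ with $\varepsilon(w)=1$; existence follows from $\aA*\phi=\bB*\phi$ forcing $\bB\aA^{-1}\in\mathcal{P}^+$ via Proposition~\ref{P:homogenous space}, and uniqueness from $\varepsilon$ restricting to the identity on $k^\times$. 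Writing $w=w_1/w_2$ with $w_i\in A$, I clear denominators to $w_2\bB=w_1\aA$; applying Lemma~\ref{L:ideal action} to $\phi_{w_2\bB}=\phi_{w_1\aA}$ and using the common sign function $\mu$ of $\aA*\phi=\bB*\phi=\sigma(\phi)$ yields
\[
\mu(w_2)^{-1}\sigma(\phi)_{w_2}\phi_\bB=\mu(w_1)^{-1}\sigma(\phi)_{w_1}\phi_\aA.
\]
The scalar $\mu(w_2)\mu(w_1)^{-1}=\mu(w)^{-1}$ equals $1$ since $\phi$ is $\varepsilon$-normalized and $\varepsilon(w)=1$; solving for $\phi_\aA^{-1}\phi_\bB$ and translating through the conjugation identity $\sigma(\phi)(w)=\phi_\aA\phi(w)\phi_\aA^{-1}$ (obtained by extending the isogeny relation from $A$ to $F_\infty$) finishes.

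The main obstacle lies in part~(\ref{I:infty basics-mult}): navigating the noncommutativity of $(H_A^+)^\perf\twistedLaurent$ when pushing the middle factor past $\tau^{\deg\sigma}$ requires the nontrivial verification that $u^{-1}\phi_\bB^{-1}\gamma(u)\in\kbar\twistedLaurent$, which is what unlocks the Frobenius-type identity $\tau^{\deg\sigma}(\cdot)\tau^{-\deg\sigma}=\sigma(\cdot)$. Once that reduction is secured the remaining manipulations are routine bookkeeping with the ideal-action and Galois-action lemmas. Part~(\ref{I:infty basics-defined}), by contrast, is essentially a direct cancellation followed by a careful application of the $\varepsilon$-normalization to eliminate the residual scalar.
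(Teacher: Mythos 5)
Your argument is correct, and it shares the paper's skeleton (work with the formula $\phi(\rho_\infty^\aA(\sigma))=\phi_\aA^{-1}\sigma(u)\tau^{\deg\sigma}u^{-1}$, use $\phi_{\aA\bB}=\sigma(\phi_\bB)\phi_\aA$, and conclude by injectivity of $\phi$ on $F_\infty$), but the mechanics differ at the two key points. In part (i) the paper evaluates $\phi\big(\rho_\infty^{\aA\bB}(\sigma\gamma)\,\rho_\infty^{\bB}(\gamma)^{-1}\big)$ and invokes the independence-of-$u$ clause of Lemma~\ref{L:u facts}(\ref{I:u independence}) for the modified series $\phi_\bB^{-1}\gamma(u)$, whose admissibility is verified through the isogeny relation $\gamma(u^{-1}\phi_x u)=\gamma(u)^{-1}\phi_\bB\phi_x\phi_\bB^{-1}\gamma(u)$; you instead multiply the two expressions directly and commute the middle factor past $\tau^{\deg\sigma}$, justifying $u^{-1}\phi_\bB^{-1}\gamma(u)\in\kbar\twistedLaurent$ from $\phi(\rho_\infty^\bB(\gamma))\in\phi(F_\infty)$ together with Lemma~\ref{L:u facts}(\ref{I:u exists}). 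This is a slightly more self-contained route: it makes the Frobenius commutation $\tau^{\deg\sigma}y\tau^{-\deg\sigma}=\sigma(y)$ explicit rather than leaving it packaged inside the independence statement, at the cost of not reusing that statement. In part (ii) both arguments reduce at once to $\phi_\aA^{-1}\phi_\bB$; the paper then chooses generators $w_1,w_2$ with $\varepsilon(w_1)=\varepsilon(w_2)=1$ (possible since $\bB\aA^{-1}$ is trivial in $\Pic^+(A)$), so that $(w_1)*\phi=\phi$ and $\phi_\aA\phi_{w_1}=\phi_\bB\phi_{w_2}$ drops out immediately, whereas you allow arbitrary $w_1,w_2\in A$, pick up leading-coefficient factors $\mu_{\sigma(\phi)}(w_i)$ that cancel because $\mu(w)=1$, and then convert $\sigma(\phi)(w)$ back to $\phi(w)$ via the conjugation identity extended to $F_\infty$. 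One small imprecision: the $\mu$ you use is that of $\sigma(\phi)=\aA*\phi$, so the cancellation rests on $\sigma(\phi)$ (not $\phi$ itself) being $\varepsilon$-normalized — which does hold, since $\aA*\phi\in X_\varepsilon$ — so this is cosmetic. Net effect: your version trades the paper's normalized choice of generators for one extra conjugation step, and both are valid.
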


\begin{lemma} \label{L:rho-infty is continuous}
The map
\[
\rho_\infty \colon W_{H_A^+}\to F_\infty^+,\quad \sigma\mapsto \rho^{A}_\infty(\sigma)
\]
is a continuous homomorphism that is unramified at all places of $H_A^+$ not lying over $\infty$.
\end{lemma}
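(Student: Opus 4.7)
I would verify the three assertions separately: well-definedness and homomorphism, continuity, and unramifiedness. The first is immediate: for $\sigma \in W_{H_A^+}$ the automorphism $\sigma$ fixes $H_A^+$, hence fixes $\phi$, so by Proposition~\ref{P:homogenous space} the ideal $\aA$ with $\aA\ast\phi = \phi$ lies in the trivial class of $\Pic^+(A)$; choosing the representative $\aA = A$ gives $\phi_A = 1$ and an unambiguous value $\rho_\infty(\sigma) = \rho_\infty^A(\sigma)$. The homomorphism property then follows directly from Lemma~\ref{L:infty basics}(i) with $\aA = \bB = A$.

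For continuity, I would fix $u = \sum_{i \geq 0} u_i \tau^{-i}$ as in Lemma~\ref{L:u facts}(i), so each $u_i \in F^\sep$, and use that the defining formula simplifies to $\phi(\rho_\infty(\sigma)) = \sigma(u)\,\tau^{\deg(\sigma)}\, u^{-1}$. Given $n \geq 1$, put $N = n d_\infty$ and let $L_N := H_A^+\kbar(u_0,\ldots,u_{N-1})$, a finite extension of $H_A^+\kbar$ inside $F^\sep$. Then $U_N := \Gal(F^\sep/L_N)$ is contained in $\ker(\deg) = \Gal(F^\sep/H_A^+\kbar)$, and hence is an open subgroup of $W_{H_A^+}$. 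For $\sigma \in U_N$, we have $\deg(\sigma) = 0$ and $\sigma$ fixes $u_0,\ldots,u_{N-1}$, so $\sigma(u) - u \in \tau^{-N}\Fbar\twistedseries$, giving
\[
\phi(\rho_\infty(\sigma)) - 1 = \big(\sigma(u) - u\big)\,u^{-1} \in \tau^{-N}\Fbar\twistedseries.
\]
Then (\ref{E:rank defn}) with $r = 1$ yields $\ord_\infty(\rho_\infty(\sigma) - 1) \geq N/d_\infty = n$, so $\rho_\infty(U_N) \subseteq 1 + \mM_\infty^n$. This gives continuity at the identity, which extends globally via the homomorphism property.

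For unramifiedness at a place $\lambda$ of $H_A^+$ not lying over $\infty$, note that inertia $I_\lambda$ is contained in $\ker(\deg) \subseteq W_{H_A^+}$ since it acts trivially on $\kbar$. The crucial claim is that $u$ can be chosen so that all its coefficients lie in a subfield $L \subseteq F^\sep$ of $H_A^+$ unramified at every place not lying over $\infty$; granting this, each $\sigma \in I_\lambda$ fixes $u$ coefficientwise, so $\phi(\rho_\infty(\sigma)) = u\,u^{-1} = 1$, forcing $\rho_\infty(\sigma) = 1$. The claim relies on the good reduction of $\phi$ at every place of $H_A^+$ not lying over $\infty$ (\S\ref{SS:Hayes}): one solves the equations $u^{-1}\phi_y u \in \kbar\twistedLaurent$ iteratively for the coefficients of $u$, and the integral structure coming from good reduction at $\lambda$ places these coefficients in the $\lambda$-unramified extension, following the construction of Yu in \cite{MR2018826}.

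The main obstacle is the unramifiedness claim — controlling the field of definition of $u$ place by place requires a careful integral analysis exploiting good reduction, whereas the well-definedness, homomorphism property, and continuity flow formally from the defining formula together with Lemma~\ref{L:u facts}.
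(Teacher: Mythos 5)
Your proposal is correct and takes essentially the same route as the paper: continuity via the observation that $\rho_\infty(\sigma)\equiv 1 \pmod{\mathfrak{m}_\infty^{n}}$ once $\sigma$ has degree zero and fixes the first $nd_\infty$ coefficients of $u$, and unramifiedness via the independence of $\rho_\infty$ from the choice of $u$ together with a particular $u$ whose coefficients generate an extension unramified away from $\infty$. The ``crucial claim'' you defer is precisely the paper's Remark~\ref{R:integrality}: the recursion (\ref{E:artin-schreier}) has right-hand side integral at every place away from $\infty$ (the coefficients of $\phi_y$ being integral over $A$), so each Artin--Schreier step $F(a_i)/F(a_{i-1})$ is unramified away from $\infty$, exactly as you sketch.
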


\subsection{The inverse of the Artin map} \label{SS:inverse}

For each $\sigma\in W_F$, fix a non-zero ideal $\aA$ of $A$ such that $\sigma(\phi)=\aA*\phi$.  By Lemma~\ref{L:lambda basics}(\ref{I:valuation of automorphism}), $(\rho^\aA_\lambda(\sigma))_\lambda$ is an idele of $F$.   We define $\rho(\sigma)$ to be the element of the idele class group $C_F$ that is represented by $(\rho^\aA_\lambda(\sigma))_\lambda$.  By Lemma~\ref{L:lambda basics}(\ref{I:lambda basics-defined}) and Lemma~\ref{L:infty basics}(\ref{I:infty basics-defined}), we find that $\rho(\sigma)$ is independent of the choice of $\aA$.     Lemma~\ref{L:lambda basics}(\ref{I:lambda basics-mult}) and Lemma~\ref{L:infty basics}(\ref{I:infty basics-mult}) imply that the map
\[
\rho\colon W_F \to C_F
\]
is a group homomorphism.    The restriction of $\rho$ to the finite index open subgroup $W_{H_A^+}$ agrees with 
\[
W_{H^+_A} \xrightarrow{\prod_\lambda \rho_\lambda} F^+_\infty \times \prod_{\lambda\neq \infty} \OO_\lambda^\times \hookrightarrow C_F
\]
where the second homomorphism is obtained by composing the natural inclusion into $\AA_F^\times$ with the quotient map $\AA_F^\times\to C_F$.  Since the representations $\rho_\lambda$ are continuous, we deduce that $\prod_\lambda\rho_\lambda$, and hence $\rho$, is continuous.  So we may view $\rho$ as a continuous homomorphism $W_F^\ab \to C_F$.   By taking profinite completions, we obtain a continuous homomorphism
\[
\widehat{\rho} \colon \Gal(F^\ab/F) \to \widehat C_F.
\]
Recall that the Artin map ${\theta}_F \colon  C_F \to \Gal(F^\ab/F)$ of class field theory gives an isomorphism
\[
\widehat{\theta}_F \colon \widehat C_F \xrightarrow{\sim} \Gal(F^\ab/F)
\]
of topological groups.  Our main result is then the following:

\begin{thm} \label{T:main}
The map $\widehat\rho\colon \Gal(F^\ab/F) \to \widehat C_F$ is an isomorphism of topological groups.  The inverse of the isomorphism $\Gal(F^\ab/F) \to \widehat C_F,\, \sigma \mapsto \widehat\rho(\sigma)^{-1}$ is the Artin map $\widehat{\theta}_F$.
\end{thm}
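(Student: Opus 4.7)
The plan is to prove the single identity $\theta_F(\rho(\sigma)) = \sigma^{-1}$ for all $\sigma \in W_F^\ab$; the rest of the theorem then follows formally. Indeed, since $\theta_F$ restricts to an isomorphism of topological groups $C_F \xrightarrow{\sim} W_F^\ab$ in the global function field setting, this identity at once shows that $\rho$ is an isomorphism $W_F^\ab \xrightarrow{\sim} C_F$ inverse to $\sigma \mapsto \theta_F(\sigma)^{-1}$, and passing to profinite completions yields Theorem \ref{T:main}.

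By continuity and the fact that $\bigcap_L \Gal(F^\ab/L)$ is trivial (intersection over finite abelian $L/F$), it suffices to check $\theta_F(\rho(\sigma))|_L = \sigma^{-1}|_L$ for each finite abelian $L/F$ and each $\sigma \in W_F^\ab$. For a fixed such $L$, Chebotarev density ensures that the Frobenius elements $\Frob_\p$ (with $\p$ a finite place of $F$ unramified in $L$) generate a dense subgroup of $\Gal(L/F)$, so it suffices to verify the identity on each such $\Frob_\p$. The standard characterization of the Artin map, $\theta_F([\pi_\p])|_L = \Frob_\p|_L$ for $\pi_\p \in \AA_F^\times$ an idele with uniformizer at $\p$ and $1$ elsewhere, then reduces the desired equality to the assertion that $\rho(\Frob_\p) \cdot [\pi_\p] \in N_{L/F}(C_L)$.

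The central computation is an explicit evaluation of $\rho(\Frob_\p)$. By Proposition \ref{P:Frob for HA+} one may take $\aA = \p$ in the construction of $\rho$, so $\rho(\Frob_\p)$ is represented by the idele with components $\rho_\lambda^\p(\Frob_\p)$. Lemma \ref{L:lambda basics}(\ref{I:valuation of automorphism}) shows this idele has valuation $-1$ at $\p$ and valuation $0$ at every other finite place, while its $\infty$-component lies in $F_\infty^+$ by construction. Multiplying by $[\pi_\p]$ yields an idele $\eta$ that is a local unit at every finite place and satisfies $\varepsilon(\eta_\infty) = 1$; the task becomes showing that $[\eta] \in N_{L/F}(C_L)$ for every finite abelian $L/F$ unramified at $\p$.

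The hard part is the precise analysis of the unit components of $\rho(\Frob_\p)$. For each finite $\lambda \neq \p, \infty$, the good reduction of $\phi$ at every finite place of $H_A^+$ makes $\rho_\lambda$ unramified outside $\lambda$ and $\infty$, which pins down the local unit $\eta_\lambda$ modulo local norms from $L$ via Lemma \ref{L:lambda basics}(\ref{I:lambda basics-defined}). The $\infty$-component demands more care: one exploits the embedding $\phi \colon F_\infty \hookrightarrow (H_A^+)^\perf\twistedLaurent$ and the conjugating series $u$ of Lemma \ref{L:u facts}, interpreting $\rho_\infty$ as the inverse of a local Artin-type reciprocity at $\infty$ so that $\eta_\infty$ is a local norm. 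Assembling these local reciprocity statements globally yields $[\eta] \in N_{L/F}(C_L)$, completing the Frobenius check. Profinite completion then delivers the theorem.
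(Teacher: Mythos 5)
Your reduction to a Frobenius computation is sound and in fact mirrors the paper's own strategy: both arguments rest on the characterization of the Artin map at unramified places and on the fact that Frobenius elements generate the relevant finite quotients (your Chebotarev step should strictly be applied to the field cut out by the kernel of the difference homomorphism rather than to $L$ itself, but that is a minor repair). The genuine gap is in what you call the central computation. The theorem hinges on the exact identity $\rho_\lambda^\p(\Frob_\p)=1$ for \emph{every} place $\lambda\neq\p$, including $\lambda=\infty$ --- this is precisely Lemma~\ref{L:Frobenius}, which you neither invoke nor prove. With it, the idele $\eta=(\rho_\lambda^\p(\Frob_\p))_\lambda\cdot\alpha(\p)$ is equal to $1$ at every place other than $\p$ and is a unit at $\p$, hence is visibly a norm from any $L$ unramified at $\p$, and your argument closes. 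Your substitutes for this lemma do not work. For $\lambda\neq\p,\infty$, the fact that $\rho_\lambda$ is unramified away from $\lambda$ and $\infty$ only controls the action of inertia and says nothing about the \emph{value} of $\rho_\lambda^\p(\Frob_\p)$; Lemma~\ref{L:lambda basics}(\ref{I:lambda basics-defined}) only compares two choices of the ideal $\aA$ for a fixed $\sigma$, so it cannot ``pin down $\eta_\lambda$ modulo local norms.'' The actual input is the Hayes relation $\Frob_\p(\xi)=\phi_\p(\xi)$ for $\xi\in\phi[\lambda^e]$, coming from good reduction at $\p$ together with the fact that $\phi_\p$ reduces to the $N(\p)$-power Frobenius.

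The $\infty$-component is the more serious issue. Your assertion that $\rho_\infty$ may be ``interpreted as the inverse of a local Artin-type reciprocity at $\infty$, so that $\eta_\infty$ is a local norm'' is exactly the kind of reciprocity statement the theorem is meant to establish; nothing in the construction of $\rho_\infty$ via the series $u$ of Lemma~\ref{L:u facts} and the embedding $\phi\colon F_\infty\hookrightarrow (H_A^+)^\perf\twistedLaurent$ yields it for free, and merely knowing $\varepsilon(\eta_\infty)=1$ together with the valuation data is far from sufficient when $L$ is (possibly wildly) ramified at $\infty$ or contains constant field extensions. The paper avoids any such appeal by proving $\rho_\infty^\p(\Frob_\p)=1$ directly: one checks that $\phi(F_\infty)\subseteq\bbar\OO_\p\twistedLaurent$, that reduction modulo a place over $\p$ is injective on $\phi(F_\infty)$, that $\phi_\p$ reduces to $\tau^{\deg(\Frob_\p)}$, and that the reduction of $\Frob_\p(u)$ equals $\tau^{\deg(\Frob_\p)}$ conjugating the reduction of $u$, whence the reduction of $\phi(\rho_\infty^\p(\Frob_\p))$ is $1$. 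Without this computation (or an equivalent argument) your key claim that $[\eta]\in N_{L/F}(C_L)$ is unsupported, and the proof of the theorem is incomplete; the surrounding formal framework (using that $\theta_F$ is a topological isomorphism onto $W_F^\ab$, a heavier but legitimate use of class field theory than the paper's, which needs it only for injectivity) is fine.
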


Before proving the theorem, we mention the following arithmetic input.

\begin{lemma} \label{L:Frobenius}
Fix a place $\lambda$ of $F$.  Let $\p\neq \lambda, \infty$ be a place of $F$ which we identify with the corresponding non-zero prime ideal of $A$.  Then $\rho^{\p}_\lambda(\Frob_\p)=1$. 
\end{lemma}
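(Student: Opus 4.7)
The plan is to pass to the reduction of $\phi$ modulo a prime above $\p$ and identify both $V_\lambda(\Frob_\p)$ and $V_\lambda(\phi_\p)$ with the same power of Frobenius on the $\lambda$-torsion. First, I check that the expression $\rho_\lambda^\p(\Frob_\p)$ is defined: by Proposition~\ref{P:Frob for HA+}, the restriction of $\Frob_\p$ to $H_A^+$ satisfies $\psi(\Frob_\p)=[\p]$ in $\Pic^+(A)$, and since the $*$-action on Hayes modules factors through $\Pic^+(A)$, we have $\Frob_\p(\phi)=\p*\phi$.

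Fix a prime $\Pp$ of $F^\sep$ above $\p$ and take $\Frob_\p$ to be the corresponding Frobenius. Let $\Pp_0=\Pp\cap H_A^+$ and write $\kappa=B/\Pp_0$ for the residue field. Since $\phi$ has good reduction at $\Pp_0$, reducing $\phi$ modulo $\Pp_0$ yields a rank $1$ Drinfeld module $\bbar\phi\colon A\to \kappa[\tau]$ of $A$-characteristic $\p$. Because $\lambda\neq \p$, the reduction map is injective on $\phi[\lambda^\infty]$ and induces a Galois-equivariant isomorphism $T_\lambda(\phi)\xrightarrow{\sim}T_\lambda(\bbar\phi)$ under which $\Frob_\p$ corresponds to the residue-field Frobenius $\xi\mapsto \xi^{q^{\deg\p}}$---equivalently, the action of $\tau^{\deg\p}$---on $\bbar\phi[\lambda^\infty]\subseteq \bbar\kappa$.

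The key calculation is $\bbar{\phi_\p}=\tau^{\deg\p}$. Indeed, $\phi_\p\in B[\tau]$ is monic of $\tau$-degree $\deg\p$, so the same is true of its reduction; the kernel of $\bbar{\phi_\p}$ lies in $\bbar\phi[\p]$, which is trivial because any rank $1$ Drinfeld module in positive $A$-characteristic $\p$ has no $\p$-torsion (its height must equal its rank). Hence $\bbar{\phi_\p}$ is monic and purely inseparable of $\tau$-degree $\deg\p$, forcing $\bbar{\phi_\p}=\tau^{\deg\p}$. The target of this isogeny is $\bbar{\p*\phi}$, which by the first paragraph equals $\Frob_\p(\bbar\phi)$---i.e., the Drinfeld module obtained by raising all coefficients of $\bbar\phi$ to the $q^{\deg\p}$-th power---and this is consistent with the commutation relation $\tau^{\deg\p}\cdot \bbar\phi_a = (\Frob_\p(\bbar\phi))_a\cdot \tau^{\deg\p}$. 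Thus on $\bbar\phi[\lambda^e]$ both $\bbar{\phi_\p}$ and $\Frob_\p$ act as $\tau^{\deg\p}$, so $V_\lambda(\phi_\p)=V_\lambda(\Frob_\p)$ and therefore $\rho_\lambda^\p(\Frob_\p)=V_\lambda(\phi_\p)^{-1}\circ V_\lambda(\Frob_\p)=1$.

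The main obstacle is the ``good reduction'' package on which the plan rests: that reduction modulo $\Pp_0$ preserves the rank, that it gives a Galois-equivariant isomorphism on prime-to-$\p$ Tate modules (with $\Frob_\p$ corresponding to the residue-field Frobenius), and that the formation of $\phi_\aA$ commutes with reduction. These are standard for Drinfeld modules with good reduction; once they are in hand, the identification $\bbar{\phi_\p}=\tau^{\deg\p}$ and its coincidence with the residue-field Frobenius on $\lambda$-torsion reduce to a degree count.
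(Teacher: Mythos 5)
Your argument only addresses half of the lemma. The statement allows $\lambda$ to be \emph{any} place of $F$ with $\p\neq\lambda,\infty$ --- in particular $\lambda=\infty$ is permitted (and needed: in the proof of Theorem~\ref{T:main} the lemma is invoked to show that the idele $(\rho^\p_\lambda(\Frob_\p))_\lambda\cdot\alpha(\p)$ is trivial at \emph{every} component $\lambda\neq\p$, including the $\infty$-component). Your proof works entirely with $T_\lambda(\phi)$ and $V_\lambda(\phi)$, which are only defined for finite places $\lambda\neq\infty$; the quantity $\rho^\p_\infty(\Frob_\p)$ is instead defined via the $\infty$-adic construction of \S\ref{SS:infty-rep}, namely as the element of $F_\infty^+$ with $\phi(\rho^\p_\infty(\Frob_\p))=\phi_\p^{-1}\,\Frob_\p(u)\,\tau^{\deg(\Frob_\p)}u^{-1}$ for a series $u\in\Fbar\twistedseries^\times$ conjugating $\phi(F_\infty)$ into $\kbar\twistedLaurent$. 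Showing that this element equals $1$ is where most of the work in the paper's proof lies: one chooses $u$ with $\p$-integral coefficients (Remark~\ref{R:integrality}), checks $\phi(F_\infty)\subseteq\bbar\OO_\p\twistedLaurent$ and that reduction $r_\p$ is injective on $\phi(F_\infty)$, and then computes $r_\p(\phi_\p)=\tau^{d}$ and $r_\p(\Frob_\p(u))=\tau^{d}r_\p(u)\tau^{-d}$ with $d=\deg(\Frob_\p)$, so that the reduced product collapses to $1$. None of this appears in your write-up, so as it stands the lemma is unproved in the case $\lambda=\infty$, and the main theorem's argument would not go through.

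For the finite places $\lambda\neq\infty$ your reduction-mod-$\Pp_0$ argument is correct and is essentially the content the paper cites from Hayes and Goss (the identity $\Frob_\p(\xi)=\phi_\p(\xi)$ on $\phi[\lambda^e]$, proved via good reduction, injectivity of reduction on prime-to-$\p$ torsion, and $\bbar{\phi_\p}=\tau^{\deg\p}$ by the height-equals-rank count); spelling that out is a reasonable expansion of the paper's citation. To complete the proof you must add the $\lambda=\infty$ case, either by the paper's reduction argument on $\bbar\OO_\p\twistedLaurent$ sketched above or by some other means, since the Tate-module formalism you use simply does not apply there.
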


\begin{proof}[Proof of Theorem~\ref{T:main}]
Take an open subgroup $U$ of $C_F$ with finite index.   Let $L_U$ be the fixed field in $F^\sep$ of the kernel of the homomorphism $W_F^\ab \xrightarrow{\rho} C_F\twoheadrightarrow C_F/U$; this gives an injective group homomorphism $\rho_U\colon \Gal(L_U/F) \hookrightarrow C_F/U$.   Let $S_U$ be the set of places $\p$ of $F$ for which $\p=\infty$ or for which there exists an idele $\alpha \in \AA_F^\times$ whose class in $C_F$ does not lie in $U$ and satisfying $\alpha_\lambda=1$ for $\lambda\neq \p$ and $\alpha_\p \in \OO_\p^\times$.  The set $S_U$ is finite since $U$ is open in $C_F$.

Take any place $\p\notin S_U$.  Choose a uniformizer $\pi_\p$ of $F_\p$ and let $\alpha(\p)$ be the idele of $F$ that is $\pi_\p$ at the place $\p$ and 1 at all other places.    Define the idele $\beta:=(\rho^\p_\lambda(\Frob_\p))_\lambda \cdot \alpha(\p) \in \AA_F^\times$.   Lemma~\ref{L:Frobenius} says that $\beta_\lambda=1$ for all $\lambda\neq \p$ while Lemma~\ref{L:lambda basics}(\ref{I:valuation of automorphism}) tells us that $\ord_\p(\beta_\p)=0$.   By our choice of $S_U$, the image of $\beta$ in $C_F$ must lie in $U$.   Therefore, $\rho_U(\Frob_\p)$ is the coset of $C_F/U$ represented by $\alpha(\p)^{-1}$.    In particular, note that $L_U/F$ is unramified at all $\p \notin S_U$.  The group $C_F/U$ is generated by the elements $\alpha(\p)$ with $\p\notin S_U$, and hence $\rho_U$ is surjective.   Therefore $\rho_U\colon \Gal(L_U/F) \xrightarrow{} C_F/U$ is an isomorphism of groups.

Define the isomorphism $\theta_U\colon C_F/U\xrightarrow{\sim} \Gal(L_U/F), \, \alpha \mapsto (\rho_U^{-1}(\alpha))^{-1}$.  For each $\p \notin S_U$, it takes the coset containing $\alpha(\p)$ to the Frobenius automorphism corresponding to $\p$.   Composing  the quotient map $C_F\to C_F/U$ with $\theta_U$, we find that the resulting homomorphism $C_F\to \Gal(L_U/F)$ equals the map $\alpha\mapsto \theta_F(\alpha)|_{L_U}$ where $\theta_F\colon C_F\to \Gal(F^\ab/F)$ is the Artin map of $F$.

Recall that class field theory gives a one-to-one correspondence between the finite abelian extensions $L$ of $F$ and the open subgroups $U$ with finite index in $C_F$.   Let $L\subseteq F^\sep$ be an arbitrary finite abelian extension of $F$.  Class field theory says that $L$ corresponds to the kernel $U$ of the map $C_F\to \Gal(L/F),\,\alpha\mapsto \theta_F(\alpha)|_L$.    By comparing with the computation above, we deduce that $L=L_U$.     Since $L$ was an arbitrary finite abelian extension, we deduce that $F^\ab = \bigcup_U L_U$.

Taking the inverse limit of the isomorphisms $\rho_U\colon \Gal(L_U/F)\xrightarrow{\sim} C_F/U$ as $U$ varies, we find that the corresponding homomorphism $\rho\colon \Gal(F^\ab/F) \to \widehat{C}_F$ is an isomorphism (the injectivity is precisely the statement that $F^\ab = \bigcup_U L_U$).  The inverse of the isomorphism $\Gal(F^\ab/F) \to \widehat C_F,\, \sigma \mapsto \widehat\rho(\sigma)^{-1}$ is obtained by combining the homomorphisms $\theta_U\colon C_F/U \xrightarrow{\sim} \Gal(L_U/F)$; from the calculation above, this equals $\widehat{\theta}_F$.
\end{proof}

\begin{cor} \label{C:main}
The homomorphism $\rho\colon W_F^\ab\to C_F$ is an isomorphism of topological groups.   The inverse of the isomorphism $W_F^\ab \to C_F,\, \sigma \mapsto \rho(\sigma)^{-1}$ is the Artin map ${\theta}_F$.
\end{cor}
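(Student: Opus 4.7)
The plan is to deduce Corollary~\ref{C:main} from Theorem~\ref{T:main} by restricting the isomorphism $\widehat\rho \colon \Gal(F^\ab/F) \xrightarrow{\sim} \widehat C_F$ to the Weil subgroups. The one classical ingredient I need is that the Artin map $\theta_F \colon C_F \to \Gal(F^\ab/F)$ actually takes values in $W_F^\ab$, which is standard: $\theta_F(\alpha)|_\kbar = \Frob^{\deg(\alpha)}$, where $\deg\colon C_F \to \ZZ$ is the degree map (well-defined by the product formula). In particular $\theta_F$ is simply the restriction of $\widehat\theta_F$ along the natural inclusion $C_F \hookrightarrow \widehat C_F$, which is itself injective because $C_F^0 := \ker(\deg\colon C_F \to \ZZ)$ is profinite in the function field setting.

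By its very construction in \S\ref{SS:inverse}, the map $\rho$ is the restriction of $\widehat\rho$ to $W_F^\ab \subseteq \Gal(F^\ab/F)$ with values in $C_F \subseteq \widehat C_F$. Applying the identity $\widehat\rho(\widehat\theta_F(\alpha)^{-1}) = \alpha$ from Theorem~\ref{T:main} to an arbitrary $\alpha \in C_F$ therefore yields $\rho(\theta_F(\alpha)^{-1}) = \alpha$ inside $C_F$. This single identity simultaneously exhibits $\rho$ as surjective and identifies $\theta_F$ with the inverse of $\sigma \mapsto \rho(\sigma)^{-1}$, while injectivity of $\rho$ follows from the injectivity of both $\widehat\rho$ and $C_F \hookrightarrow \widehat C_F$.

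It remains to upgrade this algebraic isomorphism to a homeomorphism. Applying $\deg$ to the identity $\rho(\theta_F(\alpha)^{-1}) = \alpha$ and using that any continuous homomorphism $W_F^\ab \to \ZZ$ factors through the degree map (since the kernel $\Gal(F^\ab/F\kbar)$ is profinite and $\ZZ$ is discrete and torsion-free), one concludes that $\deg \circ \rho = -\deg$. Hence $\rho$ carries $\Gal(F^\ab/F\kbar)$ bijectively onto $C_F^0$, and since both subgroups are compact Hausdorff and open in their ambient groups, this restriction is automatically a homeomorphism; translating by arbitrary elements then makes $\rho$ a homeomorphism globally. The main obstacle I anticipate is not algebraic but organizational: carefully tracking the inversion that relates $\rho$, $\widehat\rho$, $\widehat\theta_F$, and $\theta_F$, and verifying that the natural inclusions $W_F^\ab \hookrightarrow \Gal(F^\ab/F)$ and $C_F \hookrightarrow \widehat C_F$ are genuinely compatible with all the maps in sight.
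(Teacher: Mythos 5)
Your proposal is correct and takes essentially the same route as the paper: Corollary~\ref{C:main} is deduced from Theorem~\ref{T:main} by restricting along the natural maps $W_F^\ab \to \Gal(F^\ab/F)$ and $C_F \to \widehat{C}_F$, whose injectivity (for a global function field) is the key observation. You merely spell out the standard inputs the paper leaves implicit, namely that $\theta_F$ takes values in $W_F^\ab$ and the compactness/degree argument showing the resulting algebraic inverse is also a homeomorphism.
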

\begin{proof}
This follows directly from the theorem.  Observe that the natural maps $W_F^\ab \to \widehat{W_F^\ab}=\Gal(F^\ab/F)$ and $C_F\to \widehat{C}_F$ from the group to their profinite completion are both injective since $F$ is a global function field.
\end{proof}

\begin{remark}
\begin{romanenum}
\item
The isomorphism $\rho\colon W_F^\ab\to C_F$ depends only on $F$ (and not on our choices of $\infty$, $\varepsilon$, and $\phi\in X_\varepsilon$).
\item
Our proof only uses class field theory to prove that $\rho$ is injective, i.e., to show that we have constructed all finite abelian extensions of $F$.
\end{romanenum}
\end{remark}

\section{Proof of lemmas} \label{S:lemma proofs}

\subsection{Proof of Lemma~\ref{L:lambda basics}} \label{SS:lambda basics} 
\noindent  (i) By Lemma~\ref{L:ideal action}(\ref{I:ideal action, multiplication}) and Lemma~\ref{L:ideal-Galois actions}, we have  $(\sigma\gamma)\phi=(\aA\bB)*\phi$ and $\sigma(\phi_\bB)=\sigma(\phi)_\bB$.  By Lemma~\ref{L:ideal action}(\ref{I:ideal action, multiplication}) and our choice of $\aA$, we find that $\phi_{\aA\bB}=(\aA*\phi)_\bB\cdot \phi_{\aA}=\sigma(\phi)_\bB \phi_\aA=\sigma(\phi_\bB) \phi_\aA$.  We have $\sigma(\phi_\bB(\sigma^{-1}(\xi)))=\sigma(\phi)_\bB(\xi)$ for all $\xi\in \sigma(\phi)[\lambda^e]$, so $V_\lambda(\sigma)\circ V_\lambda(\phi_\bB) \circ V_\lambda(\sigma)^{-1}$ and $V_\lambda(\sigma(\phi_\bB))$ are the same automorphism of $V_\lambda(\sigma(\phi))$.  Therefore,
\begin{align*}
V_\lambda(\phi_{\aA\bB})^{-1}\circ V_\lambda(\sigma\gamma) &= V_\lambda( \phi_\aA)^{-1} \circ V_\lambda(\sigma(\phi_\bB))^{-1} \circ V_\lambda(\sigma\gamma)\\
&= V_\lambda( \phi_\aA)^{-1} \circ (V_\lambda(\sigma)\circ V_\lambda(\phi_\bB) \circ V_\lambda(\sigma)^{-1})^{-1} \circ V_\lambda(\sigma\gamma)\\
&= V_\lambda( \phi_\aA)^{-1} \circ V_\lambda(\sigma)\circ V_\lambda(\phi_\bB)^{-1} \circ V_\lambda(\sigma)^{-1} \circ V_\lambda(\sigma\gamma)\\
&= (V_\lambda( \phi_\aA)^{-1} \circ V_\lambda(\sigma))\circ( V_\lambda(\phi_\bB)^{-1} \circ V_\lambda(\gamma)).
\end{align*}
Thus $\rho^{\aA\bB}_\lambda(\sigma\gamma)=\rho^{\aA}_\lambda(\sigma) \rho^{\bB}_\lambda(\gamma)$.

\noindent (ii)  Since $\aA*\phi = \bB*\phi$, Lemma~\ref{P:homogenous space} implies that the fractional ideal $\bB \aA^{-1}$ is the identity class in $\Pic^+(A)$.   There are thus non-zero $w_1,w_2 \in A$ such that $(w_1)\aA = (w_2) \bB$ and $\varepsilon(w_1)=\varepsilon(w_2) = 1$.    In particular, $w:=w_1/w_2$ is the unique generator of $\bB\aA^{-1}$ satisfying $\varepsilon(w)=1$.  By Lemma~\ref{L:ideal action}(\ref{I:ideal action, principal}), we have $(w_1)*\phi=\phi$ and $\phi_{(w_1)}=\phi_{w_1}$.    Therefore, by Lemma~\ref{L:ideal action}(\ref{I:ideal action, multiplication}) we have
\[
\phi_{\aA(w_1)}=((w_1)*\phi)_{\aA} \cdot \phi_{(w_1)}= \phi_\aA  \phi_{w_1}.
\]
Similarly, $\phi_{\bB(w_2)}= \phi_\bB  \phi_{w_2}$, and hence $\phi_\aA \phi_{w_1} = \phi_\bB \phi_{w_2}$.   Thus
\[
\rho_{\lambda}^\aA(\sigma) \rho_{\lambda}^\bB(\sigma)^{-1} = V_\lambda(\phi_\aA)^{-1} \circ V_\lambda(\phi_\bB) = V_\lambda(\phi_{w_1}) \circ V_\lambda(\phi_{w_2})^{-1}.
\]
The automorphisms $V_\lambda(\phi_{w_1})$ and $V_\lambda(\phi_{w_2})$ both belong to $\Aut_{F_\lambda}(V_\lambda(\phi))=F_\lambda^\times$ and correspond to $w_1$ and $w_2$, respectively.  We conclude that $\rho_{\lambda}^\aA(\sigma) \rho_{\lambda}^\bB(\sigma)^{-1}= w_1 w_2^{-1}=w$.

\noindent (iii)  We view $\lambda$ as both a place of $F$ and a non-zero prime ideal of $A$.  For each $e\geq f$, the kernel of $\phi[\lambda^e] \to \phi[\lambda^e],\, \xi\mapsto \sigma^{-1}(\phi_\aA(\xi))$ has cardinality $|\phi[\lambda^f]|=N(\lambda)^f$.  So $\rho_\lambda^\aA(\sigma)^{-1}=V_\lambda(\sigma^{-1})\circ V_\lambda(\phi_\aA)$, which is an element of $\Aut_{F_\lambda}(V_\lambda(\phi))=F_\lambda^\times$, gives an $\OO_\lambda$-module homomorphism $T_\lambda(\phi)\to T_\lambda(\phi)$ whose cokernel has cardinality $N(\lambda)^f$.   Therefore, $\ord_\lambda(\rho_\lambda^\aA(\sigma)^{-1})=f$.  In particular, we have $\ord_\lambda(\rho_\lambda^\aA(\sigma))=-f$.

\subsection{Proof of Lemma~\ref{L:u facts}} \label{SS:u facts}
Fix a non-constant $y\in A$ that satisfies $\varepsilon(y)=1$ and define $h:=-d_\infty\ord_\infty(y)\geq 1$.   Since $\phi$ is $\varepsilon$-normalized and $\varepsilon(y)=1$, we have $\phi_y = \tau^h+\sum_{j=0}^{h-1} b_j \tau^{j}$ for unique $b_j \in H_A^+$.  We set $u= \sum_{i=0}^\infty a_i \tau^{-i}$ with $a_i\in \Fbar$ to be determined where $a_0\neq 0$.  Expanding out the series $\phi_y u$ and $u\tau^{h}$, we find that $\phi_y u = u\tau^{h}$ holds if and only if 
\begin{equation} \label{E:artin-schreier}
a_i^{q^h} - a_i = - \sum_{\substack{0\leq j\leq h-1,\\ i+j-h\geq 0}}  b_j a_{i+j-h}^{q^j}
\end{equation}
holds for all $i\geq 0$.  We can use the equations (\ref{E:artin-schreier}) to recursively solve for $a_0\neq 0,a_1,a_2,\ldots$.  The $a_i$ belong to $F^\sep$ since (\ref{E:artin-schreier}) is a separable polynomial in $a_i$ and the $b_j$ belong to $H_A^+\subseteq F^\sep$.  Let $k_h$ be the degree $h$ extension of $k$ in $\kbar$.  The elements of the ring $\Fbar\twistedLaurent$ that commute with $\tau^h$ are $k_h\twistedLaurent$.  Since $\tau^h$ belongs to the commutative ring $u^{-1}\phi(F_\infty) u$, we find that $u^{-1}\phi(F_\infty) u$ is a subset of $k_h\twistedLaurent$.   Thus $u \in \Fbar\twistedLaurent^\times$ has coefficients in $F^\sep$ and satisfies $u^{-1} \phi(F_\infty) u \subseteq \kbar\twistedLaurent$.\\

Recall that $\phi$ induces an embedding $\FF_\infty\hookrightarrow L$; this gives inclusions $\FF_\infty\subseteq \kbar \subseteq \Lbar$.  Fix a uniformizer $\pi$ of $F_\infty$.  There is a unique homomorphism $\iota\colon F_\infty \to \kbar\twistedLaurent$ that satisfies the following conditions:
\begin{itemize}
\item
$\iota(x)=x$ for all $x\in \FF_\infty$,
\item $\iota(\pi)=\tau^{-d_\infty}$,
\item $\ord_{\tau^{-1}}(\iota(x))=d_\infty \ord_\infty(x)$ for all $x\in F_\infty^\times$.
\end{itemize}
We have $\iota(F_\infty)=\FF_\infty(\!( \tau^{-d_\infty})\!)$.   Let $C$ be the centralizer of $\iota(F_\infty)$ in $\Fbar\twistedLaurent$.   Using that $\FF_\infty$ and $\tau^{d_\infty}$ are in $\iota(F_\infty)$, we find that $C=\FF_\infty(\!(\tau^{-d_\infty})\!)=\iota(F_\infty)$.

Take any $v\in \Fbar\twistedseries^\times$ that satisfies $v^{-1} \phi(F_\infty) v \subseteq \kbar\twistedLaurent$.  By \cite{MR2018826}*{Lemma~2.3}, there exist $w_1$ and $w_2\in \kbar\twistedseries^\times$ such that
\[
w_1^{-1}(u^{-1} \phi_x u) w_1 = \iota(x) = w_2^{-1}(v^{-1} \phi_x v) w_2
\]
for all $x\in F_\infty$.  So for all $x\in F_\infty$, we have $(uw_1)  \iota(x) (uw_1)^{-1} =\phi_x = (v w_2)  \iota(x) (v w_2)^{-1}$ and hence
\[
(w_2^{-1} v^{-1} u w_1) \iota(x) (w_2^{-1} v^{-1} u w_1)^{-1} = \iota(x).
\]
Thus $w_2^{-1} v^{-1} u w_1$ belongs to $C\subseteq \kbar\twistedLaurent$, and so $v=uw$ for some $w\in \kbar\twistedseries^\times$.

The coefficients of $v$ thus lie in $F^\sep$ since the coefficients of $u$ lie in $F^\sep$ and $w$ has coefficients in the perfect field $\kbar\subseteq F^\sep$.  This completes the proof of (i).  Since $w$ has coefficients in $\kbar$, we have $\sigma(w)=\tau^{\deg(\sigma)} w \tau^{-\deg(\sigma)}$ and hence
\begin{align*}
\sigma(v) \tau^{\deg(\sigma)} v^{-1} &= \sigma(uw)\tau^{\deg(\sigma)} (uw)^{-1}\\
&= \sigma(u) \sigma(w) \tau^{\deg(\sigma)} w^{-1} u^{-1}\\
& = \sigma(u) (\tau^{\deg(\sigma)} w \tau^{-{\deg(\sigma)}})\tau^{\deg(\sigma)} w^{-1} u^{-1}\\
&=\sigma(u) \tau^{\deg(\sigma)} u^{-1}.
\end{align*}
This proves that $\phi_{\aA}^{-1} \sigma(u) \tau^{\deg(\sigma)} u^{-1}$ is independent of the initial choice of $u$.

We now show that $ \phi_{\aA}^{-1} \sigma(u) \tau^{\deg(\sigma)} u^{-1}$ belongs to $\phi(F_\infty)$.   We have seen that $\phi(F_\infty)$ is conjugate to $\iota(F_\infty)$ in $\Fbar\twistedLaurent$ and that $\iota(F_\infty)$ is its own centralizer in $\Fbar\twistedLaurent$.   Therefore, the centralizer of $\phi(F_\infty)$ in $\Fbar\twistedLaurent$ is $\phi(F_\infty)$.  So it suffices to prove that $ \phi_{\aA}^{-1} \sigma(u) \tau^{\deg(\sigma)} u^{-1}$ commutes with $\phi_x$ for all $x\in A$.  Take any $x\in A$.  We have $\sigma(u^{-1} \phi_x u)= \tau^{\deg \sigma} (u^{-1} \phi_x u) \tau^{-\deg \sigma}$ since $u^{-1} \phi_x u$ has coefficients in $\kbar$.   By our choice of $\aA$, we have $\sigma(\phi)_x = (\aA*\phi)_x= \phi_\aA \phi_x \phi_\aA^{-1}$.  Therefore,
\[
\tau^{\deg \sigma} (u^{-1} \phi_x u) \tau^{-\deg \sigma} = \sigma(u^{-1} \phi_x u) = \sigma(u)^{-1} \sigma(\phi)_x \sigma(u) = \sigma(u)^{-1} \phi_\aA \phi_x \phi_\aA^{-1} \sigma(u)
\]
and one concludes that $\phi_\aA^{-1} \sigma(u)\tau^{\deg \sigma} u^{-1}$ commutes with $\phi_x$.

The only thing that remains to be proved is that $\phi_\aA^{-1} \sigma(u)\tau^{\deg \sigma} u^{-1}$ belongs to $\phi(F^+_\infty)$.  Since $\phi$ is $\varepsilon$-normalized, this is equivalent to showing that the first non-zero coefficient of the Laurent series $\phi_\aA^{-1} \sigma(u)\tau^{\deg \sigma} u^{-1}$ in $\tau^{-1}$ is 1.   Since $\phi_\aA$ is a monic polynomial of $\tau$, we need only show that the first non-zero coefficient of $\sigma(u)\tau^{\deg \sigma} u^{-1}\in \Fbar\twistedLaurent$ is 1, i.e., $\sigma(a_0) a_0^{-q^{\deg \sigma}}=1$.      This is true since $a_0\in \kbar^\times$; indeed, $a_0$ is non-zero and satisfies $a_0^{q^h}-a_0=0$ by (\ref{E:artin-schreier}).

\begin{remark} \label{R:integrality}
Take any place $\p\neq \infty$ of $F$ and any valuation $v\colon F^\sep \to \QQ\cup\{+\infty\}$ extending $\ord_\p$.    We have $v(b_j) \geq 0$ for $0\leq j\leq h$ (in \S\ref{SS:Hayes}, we noted that the coefficients of $\phi_y$ are integral over $A$).  Using (\ref{E:artin-schreier}) repeatedly, we find that $v(a_i) \geq 0$ for all $i\geq 0$ (the roots of (\ref{E:artin-schreier}), as a polynomial in $a_i$, differ by a value in $k_h$).

For each $i\geq 1$, the extension $F(a_{i})/F(a_{i-1})$ is an Artin-Schreier extension.  Since the right-hand side of (\ref{E:artin-schreier}) is integral at each place not lying over $\infty$, we deduce that $F(a_{i})/F(a_{i-1})$ is unramified at all places not lying over $\infty$.

Let $L\subseteq F^\sep$ be the extension of $F$ generated by $\kbar$ and the set $\{a_i\}_{i\geq 0}$, i.e., the extension of $F\kbar$ generated by the coefficients of $u$.  We find that $L$ is unramified at all places of $F$ away from $\infty$.  
\end{remark}

\subsection{Proof of Lemma~\ref{L:infty basics}}
Fix a series $u\in \Fbar\twistedseries^\times$ as in Lemma~\ref{L:u facts}(\ref{I:u exists}). 

\noindent (i)   We have $\phi(\rho_\infty^{\aA \bB}(\sigma\gamma))= \phi_{\aA\bB}^{-1} \sigma\gamma(u) \tau^{\deg(\sigma\gamma)} u^{-1}$ and $\phi(\rho_\infty^\bB(\gamma))= \phi_\bB^{-1}\gamma(u)\tau^{\deg(\gamma)} u^{-1}$.  So it suffices to show that $\phi(\rho^\aA_\infty(\sigma))$ equals
\begin{align*}
\phi(\rho_\infty^{\aA \bB}(\sigma\gamma) \rho_\infty^\bB(\gamma)^{-1} )&= \phi_{\aA\bB}^{-1} \,\sigma\gamma(u) \tau^{\deg(\sigma\gamma)} u^{-1} \cdot u \tau^{-\deg(\gamma)}\gamma(u)^{-1} \phi_\bB\\
&= \phi_{\aA\bB}^{-1} \sigma(\gamma(u)) \tau^{\deg(\sigma)}  \gamma(u)^{-1} \phi_\bB\\
&= \phi_{\aA\bB}^{-1}\sigma(\phi_\bB) \phi_\aA \cdot \phi_\aA^{-1} \sigma(\phi_\bB^{-1}\gamma(u)) \tau^{\deg(\sigma)}  (\phi_\bB^{-1} \gamma(u))^{-1}.
\end{align*}
We showed in \S\ref{SS:lambda basics} that $ \phi_{\aA\bB}=\sigma(\phi_\bB) \phi_\aA$, so we need only prove that $\phi_\aA^{-1} \sigma(\phi_\bB^{-1}\gamma(u)) \tau^{\deg(\sigma)}  (\phi_\bB^{-1} \gamma(u))^{-1} $ and $\phi(\rho^\aA_\infty(\sigma))$ agree.  By Lemma~\ref{L:u facts}(\ref{I:u independence}), it thus suffices to show that $(\phi_\bB^{-1}\gamma(u))^{-1}\phi(F_\infty) \phi_\bB^{-1}\gamma(u) \subseteq \kbar\twistedLaurent$.  

Take any $x\in F_\infty$.   Since $u^{-1}\phi_x u$ belongs to $\kbar\twistedLaurent$, so does $\gamma(u^{-1} \phi_x u)$.  By our choice of $\bB$, we have
\[
\gamma(u^{-1} \phi_x u)= \gamma(u)^{-1} \gamma(\phi)_x \gamma(u) = \gamma(u)^{-1} (\bB*\phi)_x \gamma(u) = \gamma(u)^{-1} \phi_\bB \phi_x \phi_\bB^{-1} \gamma(u),
\]
and hence $(\phi_\bB^{-1}\gamma(u))^{-1}  \phi_x (\phi_\bB^{-1} \gamma(u))$ is an element of $\kbar\twistedLaurent$.

\noindent (ii)
First note that
\[
\phi(\rho_\infty^{\aA}(\sigma) \rho_\infty^{\bB}(\sigma)^{-1}) = \phi_\aA^{-1} \sigma(u)\tau^{\deg \sigma} u^{-1} \cdot u \tau^{-\deg \sigma} \sigma(u)^{-1} \phi_\bB=\phi_\aA^{-1} \phi_\bB.
\]
In \S\ref{SS:lambda basics}, we showed that $\phi_\aA^{-1} \phi_\bB = \phi_{w_1}\phi_{w_2}^{-1}$ where $w_1$ and $w_2$ belong to $A$ and $w=w_1/w_2$ is the unique generator of $\bB\aA^{-1}$ satisfying $\varepsilon(w)=1$.   Therefore, $\rho_\infty^{\aA}(\sigma) \rho_\infty^{\bB}(\sigma)^{-1}=w$ as desired.

\subsection{Proof of Lemma~\ref{L:rho-infty is continuous}}
The map $\rho_\infty$ is a homomorphism by Lemma~\ref{L:infty basics}(\ref{I:infty basics-mult}).  Fix a series $u=\sum_{i\geq 0} c_i \tau^{-i}$ as in Lemma~\ref{L:u facts}(\ref{I:u exists}).  For $\sigma \in W_{H_A^+}$, we have $\ord_\infty(\rho_\infty(\sigma))= d_\infty \ord_{\tau^{-1}}(\phi(\rho_\infty(\sigma))) = -d_\infty \deg(\sigma)$.

So to prove that $\rho_\infty$ is continuous, we need only show that $\Gal(F^\sep/H_A^+\kbar) \xrightarrow{\rho_\infty} \OO_\infty^\times$ is continuous.    It suffices to show that for each $e\geq 1$, the homomorphism
\[
\beta_e \colon  \Gal(F^\sep/H_A^+\kbar) \xrightarrow{\rho_\infty} \OO_\infty^\times \to (\OO_\infty/\mathfrak{m}_\infty^e)^\times
\]
has open kernel.  For each $\sigma \in  \Gal(F^\sep/H_A^+\kbar)$, we have $\phi(\rho_\infty(\sigma))= \sigma(u) u^{-1}$.  One can check that $\beta_e(\sigma)=1$, equivalently $\ord_\infty(\rho_\infty(\sigma)-1)\geq e$, if and only if $\ord_{\tau^{-1}}(\sigma(u) u^{-1} - 1)=\ord_{\tau^{-1}}(\sigma(u)-u)$ is at least $ed_\infty$.   Thus the kernel of $\beta_e$ is $\Gal(F^\sep/L_e)$ where $L_e$ is the finite extension of $H_A^+\kbar$ generated by the set $\{c_i\}_{0\leq i< ed_\infty}$

It remains to prove that $\rho_\infty$ is unramified at all places of $H_A^+$ not lying over $\infty$.   Let $L'$ be the subfield of $F^\sep$ fixed by $\ker(\rho_\infty)$; it is the extension of $H_A^+\kbar$ generated by the set $\{c_i\}_{i\geq 0}$.  The field $L'$ does not depend on the choice of $u$, since $\rho_\infty$ does not.   In Remark~\ref{R:integrality}, we saw that the extension $L$ of $F\kbar$ generated by the coefficients of a particular $u$ was unramified at all places of $F$ away from $\infty$.   Therefore, $L'$ is unramified at all places of $F$ away from $\infty$, since $L$ and $H_A^+$ both have this property.

\subsection{Proof of Lemma~\ref{L:Frobenius}} 
First consider the case $\lambda \neq \infty$.  For each $e\geq 1$, we have $\Frob_\p(\xi) = \phi_\p(\xi)$ for all $\xi \in \phi[\lambda^e] \subseteq F^\sep$; this was observed by Hayes \cite{MR1196509}*{p.28}, for a proof see \cite{MR1423131}*{\S7.5}.  Thus the map $V_\lambda(\phi_\p)\colon V_\lambda(\phi) \to V_\lambda(\phi)$ equals $V_\lambda(\Frob_\p)$, and hence $\rho^\p_\lambda(\Frob_\p)=V_\lambda(\phi_\p)^{-1}\circ V_\lambda(\Frob_\p)=1$.

We may now assume that $\lambda=\infty$.  Let $\Fbar_\p$ be an algebraic closure of $F_\p$.   The field $\Fbar_\p$ has a unique place extending the $\p$-adic place of $F_\p$ and let $\bbar\OO_\p\subseteq \Fbar_\p$ be the corresponding valuation ring.   The residue field of $\bbar\OO_\p$ is an algebraic closure of $\FF_\p$ that we denote by $\FFbar_\p$, and let $r_\p\colon \bbar\OO_\p \to \FFbar_\p$ be the reduction homomorphism.  

Choose an embedding $\Fbar \hookrightarrow \Fbar_\p$.   The restriction map $\Gal(\Fbar_\p/F_\p)\to \Gal(\Fbar/F)$ is an inclusion that is well-defined up to conjugation.  So after conjugating, we may assume that $\Frob_\p$ lies in $\Gal(\Fbar_\p/F_\p)$; it thus acts on $\bbar\OO_\p$ and we have $r_\p(\Frob_\p(\xi))= r_\p(\xi)^{N(\p)}$ for all $\xi \in \bbar\OO_\p$.  We will also denote by $r_\p$ the map $\bbar\OO_\p\twistedLaurent \to\FFbar_\p\twistedLaurent$ where one reduces the coefficients of the series.

In \S\ref{SS:Hayes}, we noted that the coefficients of $\phi_x$ are integral over $A$ for each $x\in A$.  This implies that $\phi(A)\subseteq \bbar\OO_\p[\tau]$.  Define the homomorphism
\[
\bbar \phi \colon A \xrightarrow{\phi} \bbar\OO_\p[\tau] \xrightarrow{r_\p} \FFbar_\p[\tau].
\]
The map $\bbar\phi$ is a Drinfeld module over $\FFbar_{\p}$ of rank 1 since $\phi$ is normalized.    Since $\phi$ is normalized, we find that $\phi_x \in \bbar\OO_\p\twistedLaurent^\times$ for all non-zero $x\in A$.   Therefore, $\phi(F) \subseteq \bbar\OO_\p\twistedLaurent$.    Using that $\bbar\OO_\p\twistedLaurent$ is complete with respect to $\ord_{\tau^{-1}}$ and that (\ref{E:rank defn}) holds with $r=1$, we deduce that $\phi(F_\infty) \subseteq \bbar\OO_\p\twistedLaurent$.

We claim that $r_\p$ induces an isomorphism $\phi(F_\infty) \to \bbar\phi(F_\infty)$.   Since $\bbar\phi$ is a Drinfeld module, and hence injective, the map $r_\p$ induces an isomorphism $\phi(A)\to \bbar\phi(A)$ which then extends to an isomorphism $\phi(F)\to \bbar\phi(F)$ of their quotient fields.   The map $r_\p\colon \phi(F)\to \bbar\phi(F)$ preserves the valuation $\ord_{\tau^{-1}}$ so by the uniqueness of completions, $r_\p$ also gives an isomorphism $\phi(F_\infty)\to \bbar\phi(F_\infty)$.

Thus to prove that $\rho_\infty^\p(\Frob_\p)$ equals 1, it suffices to show that $r_\p(\phi(\rho_\infty^\p(\Frob_\p)))=1$.   Lemma~\ref{L:u facts}(\ref{I:u exists}), along with Remark~\ref{R:integrality}, shows that there is a series $u \in \bbar\OO_\p\twistedseries^\times$ with coefficients in $F^\sep$ such that $u^{-1}\phi(F_\infty) u \subseteq \kbar\twistedLaurent$.  With such a series $u$, we have
\[
\phi(\rho_\infty(\Frob_\p)) = \phi_\p^{-1} \Frob_\p(u) \tau^{d} u^{-1}
\]
where $d=\deg(\Frob_\p)$.    The polynomial $\phi_\p \in F[\tau]$ has coefficients in $\OO_\p$ and $r_\p(\phi_\p)=\tau^d$; thus $\phi_\p^{-1}$ belongs to $\bbar\OO_\p\twistedLaurent$ and $r_\p(\phi_\p^{-1})=\tau^{-d}$.  We have $r_\p(\Frob_\p(u))=\tau^d r_\p(u) \tau^{-d}$.   Therefore,
\[
r_\p(\phi(\rho_\infty(\Frob_\p))) = r_\p(\phi_\p^{-1}) r_\p(\Frob_\p(u)) \tau^{d} r_\p(u)^{-1}= \tau^{-d} \cdot \tau^d r_\p(u) \tau^{-d} \cdot \tau^d r_\p(u)^{-1}=1.
\]

\section{The rational function field} \label{S:k(t)}

We return to the rational function field $F=k(t)$ where $k$ is a finite field with $q$ elements.   Using our constructions, we shall recover the description of $F^\ab$ given by Hayes in \cite{MR0330106}; he expressed $F^\ab$ as the compositum of three linearly disjoint fields over $F$.  In particular, we will explain how two of these fields arise naturally from our representation $\rho_\infty$.

We define $A=k[t]$; it is the subring of $F$ consisting of functions that are regular away from a unique place $\infty$ of $F$.  We have $\FF_\infty=k$, $F_\infty=k(\!(t^{-1})\!)$ and $\ord_\infty\colon F_\infty^\times \to \ZZ$ is the valuation for which $\ord_\infty(t^{-1})=1$. Let $\varepsilon\colon F_\infty^\times \to k^\times$ be the unique sign function of $F_\infty$ that satisfies $\varepsilon(t^{-1})=1$.   Those $x\in F_\infty^\times$ for which $\varepsilon(x)=1$ form the subgroup $F_\infty^+:= \ang{t}(1+t^{-1} k[\![t^{-1}]\!])=\ang{t}(1+\mathfrak{m}_\infty)$.\\

Recall that the Carlitz module is the homomorphism $\phi\colon A \to F[\tau],\, a\mapsto \phi_a$ of $k$-algebras for which $\phi_t = t + \tau$.   In the notation of \S\ref{SS:Hayes}, $\phi$ is a Hayes module  for $\varepsilon$.   The coefficients of $\phi_t$ lie in $F$, so the normalizing field $H_A^+$ for $(F,\infty,\varepsilon)$ equals $F$.   We saw that $\Gal(H_A^+/F)$ acts transitively on $X_\varepsilon$, so $X_\varepsilon =\{\phi\}$.

For each place $\lambda$ of $F$ (including $\infty$!), we have defined a continuous homomorphism $\rho_\lambda\colon W_F^\ab \to F_\lambda^\times,\, \sigma \mapsto \rho^A_\lambda(\sigma)$.    The representation $\rho_\lambda$ is characterized by the property that 
\[
\rho_\lambda(\Frob_\p)=\p
\] 
holds for each monic irreducible polynomial $\p\in A=k[t]$ not corresponding to $\lambda$ (combine Lemmas~\ref{L:lambda basics}(\ref{I:lambda basics-defined}), \ref{L:infty basics}(\ref{I:infty basics-defined}) and \ref{L:Frobenius} to show that $\rho^A_\lambda(\Frob_\p) \rho_\lambda^{(\p)}(\Frob_\p)^{-1} = \rho^A_\lambda(\Frob_\p)$ is the unique generator $w$ of $\p A$ that satisfies $\varepsilon(w)=1$).      For $\lambda\neq \infty$, the representation $\rho_\lambda$ has image in $\OO_\lambda^\times$, so it extends to a continuous representation $\Gal(F^\ab/F)\to \OO_\lambda^\times$.   The  image of $\rho_\infty$ lies in $F_\infty^+$, but is unbounded (so it does not extend to a Galois representation). 

 Combining the representations $\rho_\lambda$ together, we obtain a continuous homomorphism
\begin{equation} \label{E:Carlitz expression}
\prod_\lambda \rho_\lambda \colon W_F^{\ab} \to F_\infty^+\times \prod_{\lambda\neq \infty} \OO_\lambda^\times.
\end{equation}
By composing $\prod_\lambda \rho_\lambda$ with the quotient map $\AA_F^\times \to C_F$, we obtain a continuous homomorphism $\rho\colon W_F^\ab \to C_F$.    Corollary~\ref{C:main} says that $\rho$ is an isomorphism of topological groups (and that the inverse of $\sigma \mapsto \rho(\sigma)^{-1}$ is the Artin map $\theta_F$).    The quotient map $ F_\infty^+\times \prod_{\lambda\neq \infty} \OO_\lambda^\times\to C_F$ is actually an isomorphism, so the map (\ref{E:Carlitz expression}) is also an isomorphism.  Taking profinite completions, we obtain an isomorphism
\begin{equation} \label{E:Carlitz expression 2}
\Gal(F^\ab/F) \xrightarrow{\sim} \widehat{F_\infty^+}\times \prod_{\lambda\neq \infty} \OO_\lambda^\times= \widehat{\ang{t}} \cdot (1+\mathfrak{m}_\infty) \times \prod_{\lambda\neq \infty} \OO_\lambda^\times.
\end{equation}
Using this isomorphism, we can now describe three linearly disjoint abelian extensions of $F$ whose compositum is $F^\ab$.

\subsubsection{Torsion points}
The representation $\chi:=\prod_{\lambda\neq \infty}\rho \colon \Gal(F^\ab/F)\to \prod_{\lambda\neq\infty} \OO_\lambda^\times = \widehat{A}^\times$ arises from the Galois action on the torsion points of $\phi$ as described in \S\ref{SS:rational field}.  The fixed field in $F^\ab$ of $\ker(\chi)$ is the field $K_\infty:= \cup_{m} F(\phi[m])$ where the union is over all monic polynomials $m$ of $A$, and we have an isomorphism $\Gal(K_\infty/F)\xrightarrow{\sim} \widehat{A}^\times$.    The field $K_\infty$, which was first given be Carlitz, is a geometric extension of $F$ that is tamely ramified at $\infty$.

\subsubsection{Extension of constants}
Define the homomorphism $\deg \colon W_F^\ab \to \ZZ$ by $\sigma\mapsto -\ord_\infty(\rho_\infty(\sigma))$; this agrees with our usual definition of $\deg(\sigma)$ [it is easy to show that $\ord_{\tau^{-1}}(\phi(\rho_\infty(\sigma)))$ equals $\ord_{\tau^{-1}}(\tau^{\deg(\sigma)})=-\deg(\sigma)$, and then use (\ref{E:rank defn}) with $r=d_\infty=1$].  The map $\deg$ thus factors through $W(\kbar(t)/k(t))\xrightarrow{\sim} \ZZ$, where $W(\kbar(t)/k(t))$ is the group of $\sigma\in \Gal(\kbar(t)/k(t))$ that act on $\kbar$ as an integral power of $q$.  Of course, $\kbar(t)/k(t)$ is an abelian extension with $\Gal(\kbar(t)/k(t))\xrightarrow{\sim} \widehat{\ZZ}$.

\subsubsection{Wildly ramified extension}
Define the homomorphism 
\[
W_F^\ab \to 1+ \mathfrak{m}_\infty,\quad \sigma\mapsto \rho_\infty(\sigma)/t^{\deg(\sigma)};
\]
it is well-defined since $\ord_\infty(\rho_\infty(\sigma))=-\deg(\sigma)$ and since the image of $\rho_\infty$ is contained in $F^+_\infty=\ang{t} (1+\mathfrak{m}_\infty)$.    Since $1+\mathfrak{m}_\infty$ is compact, this gives rise to a Galois representation
\[
\beta\colon \Gal(F^\ab/F) \to 1+ \mathfrak{m}_\infty.
\]
Let $L_\infty$ be the fixed field of $\ker(\beta)$ in $F^\ab$.   The field  $L_\infty/F$ is an abelian extension of $F$ that is unramified away from $\infty$ and is wildly ramified at $\infty$; it is also a geometric extension of $F$.

We will now give an explicit description of this field.  Define $a_0=1$ and for $i\geq 1$, we recursively choose $a_i \in F^\sep$ that satisfy the equation
\begin{equation} \label{E:artin-schreier Carlitz}
a_i^{q} - a_i = - t a_{i-1}.
\end{equation}
This gives rise to a chain of field extensions, $F\subset F(a_1) \subset F(a_2) \subset F(a_3) \subset \cdots$.     We claim that $L_\infty=\bigcup _i F(a_i)$. 

The construction of $\rho_\infty$ starts by finding an appropriate series $u\in \Fbar\twistedseries^\times$ with coefficients in $F^\sep$.    Define $u:=\sum_{i\geq 0} a_i \tau^{-i}$ with the $a_i$ defined as above.   The recursive equations that the $a_i$ satisfy give us that $\phi_t u = u\tau$ (just multiply them out and check!).  As shown in \S\ref{SS:u facts}, this implies that $u^{-1} \phi(F_\infty) u \subseteq \kbar\twistedLaurent$.    For $\sigma\in W_F^\ab$, $\rho_\infty(\sigma)$ is then the unique element of $F_\infty^+$ for which $\phi(\rho_\infty(\sigma))= \sigma(u)\tau^{\deg(\sigma)} u^{-1}$ where $\sigma(u)$ is obtained by letting $\sigma$ act on the coefficients of $u$.   In particular, $\phi(\beta(\sigma))=\sigma(u)u^{-1}$ for all $\sigma \in \Gal(L_\infty/F)$ since $L_\infty/F$ is a geometric extension.   We find that $\beta(\sigma)=1$ if and only if $\sigma(u)=u$, and thus $L_\infty$ is the extension of $F$ generated by the set $\{a_i\}_{i\geq 1}$.\\

Using the isomorphism (\ref{E:Carlitz expression 2}), we find that $F^\ab$ is the compositum of the fields $K_\infty$, $\kbar(t)$ and $L_\infty$, and that they are linearly disjoint over $F$.  This is exactly the description given by Hayes in \cite{MR0330106}*{\S5}; the advantage of our representation $\rho_\infty$ is that the fields $\kbar(t)$ and $L_\infty$ arise naturally.

\begin{bibdiv}
\begin{biblist}

\bib{Magma}{article}{
      author={Bosma, Wieb},
      author={Cannon, John},
      author={Playoust, Catherine},
       title={The {M}agma algebra system. {I}. {T}he user language},
        date={1997},
     journal={J. Symbolic Comput.},
      volume={24},
      number={3-4},
       pages={235\ndash 265},
        note={Computational algebra and number theory (London, 1993)},
}

\bib{MR1501937}{article}{
      author={Carlitz, Leonard},
       title={A class of polynomials},
        date={1938},
     journal={Trans. Amer. Math. Soc.},
      volume={43},
      number={2},
       pages={167\ndash 182}
}

\bib{MR902591}{incollection}{
      author={Deligne, Pierre},
      author={Husemoller, Dale},
       title={Survey of {D}rinfel$'$d modules},
        date={1987},
   booktitle={Current trends in arithmetical algebraic geometry ({A}rcata,
  {C}alif., 1985)},
      series={Contemp. Math.},
      volume={67},
   publisher={Amer. Math. Soc.},
     address={Providence, RI},
       pages={25\ndash 91},
}

\bib{MR0384707}{article}{
      author={Drinfel$'$d, V.~G.},
       title={Elliptic modules},
        date={1974},
     journal={Mat. Sb. (N.S.)},
      volume={94(136)},
       pages={594\ndash 627, 656},
}

\bib{MR0439758}{article}{
      author={Drinfel$'$d, V.~G.},
       title={Elliptic modules. {II}},
        date={1977},
     journal={Mat. Sb. (N.S.)},
      volume={102(144)},
      number={2},
       pages={182\ndash 194, 325},
        note={English translation: Math. USSR-Sb. \textbf{31} (1977),
  159--170},
}

\bib{MR1423131}{book}{
      author={Goss, David},
       title={Basic structures of function field arithmetic},
      series={Ergebnisse der Mathematik und ihrer Grenzgebiete (3)},
   publisher={Springer-Verlag},
     address={Berlin},
        date={1996},
      volume={35},
}

\bib{MR0330106}{article}{
      author={Hayes, D.~R.},
       title={Explicit class field theory for rational function fields},
        date={1974},
     journal={Trans. Amer. Math. Soc.},
      volume={189},
       pages={77\ndash 91},
}

\bib{MR535766}{incollection}{
      author={Hayes, David~R.},
       title={Explicit class field theory in global function fields},
        date={1979},
   booktitle={Studies in algebra and number theory},
      series={Adv. in Math. Suppl. Stud.},
      volume={6},
   publisher={Academic Press},
     address={New York},
       pages={173\ndash 217},
}

\bib{MR1196509}{incollection}{
      author={Hayes, David~R.},
       title={A brief introduction to {D}rinfeld modules},
        date={1992},
   booktitle={The arithmetic of function fields ({C}olumbus, {OH}, 1991)},
      series={Ohio State Univ. Math. Res. Inst. Publ.},
      volume={2},
   publisher={de Gruyter},
     address={Berlin},
       pages={1\ndash 32},
}

\bib{tate-gcft}{incollection}{
      author={Tate, J.~T.},
       title={Global class field theory},
        date={1967},
   booktitle={Algebraic {N}umber {T}heory ({P}roc. {I}nstructional {C}onf.,
  {B}righton, 1965)},
   publisher={Thompson, Washington, D.C.},
       pages={162\ndash 203},
}

\bib{MR2018826}{article}{
      author={Yu, Jiu-Kang},
       title={A {S}ato-{T}ate law for {D}rinfeld modules},
        date={2003},
     journal={Compositio Math.},
      volume={138},
      number={2},
       pages={189\ndash 197},
}

\bib{Zywina-SatoTate}{unpublished}{
      author={Zywina, David},
       title={{T}he {S}ato-{T}ate law for {D}rinfeld modules},
        date={2011},
        note={preprint},
}

\end{biblist}
\end{bibdiv}

\end{document}